\newtheorem{lemma}{Lemma}[section]
\newtheorem{defn}{Definition}[section]
\newtheorem{assum}{Assumption}[section]
\newtheorem{prop}{Proposition}[section]
\newtheorem{remark}{Remark}[section]
\newtheorem{theorem}{Theorem}[section]
\newtheorem{condition}{Condition}[section]
\newcommand{\RR}{\mathbb{R}}
\theoremstyle{definition}
\newtheorem{expl}{Example}
\newcommand{\argsup}{\operatorname*{argsup}}
\begin{document}

\title {Gaussian Processes with Input Location Error and Applications to the Composite Parts Assembly Process}
\author{Wenjia Wang\\Hong Kong University of Science and Technology \\ \and Xiaowei Yue\\Virginia Polytechnic Institute and State University\\ \and Benjamin Haaland\\ University of Utah\\ \and C. F. Jeff Wu\\Georgia Institute of Technology} 
\date{}
\maketitle
\vspace{-5mm}

\begin{abstract}
In this paper, we investigate Gaussian process modeling with input location error, where the inputs are corrupted by noise. Here, the best linear unbiased predictor for two cases is considered, according to whether there is noise at the target unobserved location or not. We show that the mean squared prediction error converges to a non-zero constant if there is noise at the target unobserved location, and provide an upper bound of the mean squared prediction error if there is no noise at the target unobserved location. 
We investigate the use of stochastic Kriging in the prediction of Gaussian processes with input location error, and show that stochastic Kriging is a good approximation when the sample size is large. Several numeric examples are given to illustrate the results, and a case study on the assembly of composite parts is presented. Technical proofs are provided in the Appendix.
\end{abstract}

\section{Introduction}
Gaussian process modeling is widely used to recover underlying functions from scattered evaluations, possibly corrupted by noise. This method has been utilized in spatial statistics for several decades \cite{cressie2015statistics,matheron1963principles}. Later, Gaussian process modeling has been applied in computer experiments to build emulators of their outputs \cite{sacks1989design}. In order to capture the randomness of real systems, it is natural to use stochastic simulation in computer experiments. For Gaussian process modeling, the output associated with each input can be decomposed as the sum of a mean Gaussian process output and random {i.i.d. }(Gaussian) noise. Following the terminology in design of experiments \cite{wu2011experiments}, we call the noise added to the mean Gaussian process output as \textit{extrinsic} noise. The extrinsic noise is usually from uncertainty associated with responses, such as measurement errors, computational errors and other unquantified errors, {and does not come from random process}. The corresponding Gaussian process modeling with extrinsic noise is called \textit{stochastic Kriging} \cite{ankenman}. In spatial statistics, the extrinsic noise {attributes to the nugget effect} \cite{matheron1963principles,roustant2012dicekriging,stein1999interpolation}.

Besides extrinsic noise, in some cases, the input variables are also corrupted by noise. Noisy or uncertain inputs are quite common in spatial statistics, because geostatistical data are often indexed by imprecise locations. Detailed examples can be found in \cite{barber2006modelling,veneziano1987statistical}. We call the noise of input variables as \textit{intrinsic} noise. {It is worth noting that the ``intrinsic'' in this paper is different from the ``intrinsic'' for stochastic processes. In this paper, the intrinsic noise comes from the natural uncertainties inherent to the complex systems, such as actuating uncertainty, controller fluctuation, and internal measurement error. In contrast to the extrinsic noise that is related to the response, intrinsic noise is associated with input variables.} If the input variables are corrupted by noise in a Gaussian process, it is known as a Gaussian process with input location error, and the corresponding best linear unbiased predictor is called Kriging adjusting for location error (KALE) \cite{cressie2003spatial}. Also see \cite{bocsi2013hessian,dallaire2009learning,girard2004approximate,mchutchon2011gaussian} for more discussions. KALE has been applied in a spectrum of arenas, including robotics \cite{deisenroth2015gaussian}, wireless networks \cite{muppirisetty2016spatial}, and Wi-Fi fingerprinting \cite{he2017indoor}. 

KALE predicts the mean Gaussian process output at an unobserved point \textit{without} intrinsic noise. In many applications, however, the prediction of the mean Gaussian process output at an unobserved point \textit{with} intrinsic noise is desired. A motivating example is the composite aircraft fuselage assembly process. In this process, a model is needed to predict the dimensional deviations under noisy actuators' forces. Further, when new actuator forces are implemented in practice, there is an inevitable intrinsic noise, i.e., uncertainty in the actually delivered actuator forces. Therefore, the output at an unobserved point has intrinsic noise. Under this scenario, we consider Kriging adjusting for location error and noise (KALEN), which is the best linear unbiased predictor of the mean Gaussian process output at an unobserved point with intrinsic noise. {For another example, in the electric stability control system of vehicles, a model is developed to link the inputs (i.e., braking pressure and engine torque) and the outputs (i.e., stability control loss). Intrinsic noise inevitably exists in this system due to the uncertainties in wheel pressure modulators, pressure reservoir, and electric pump. Other than the two example mentioned above, KALEN fits many applications better than KALE due to the ubiquity of actuating errors in engineering systems.} 

In this paper, we discuss three predictors, KALE, KALEN, and stochastic Kriging, applied in prediction and uncertainty quantification of Gaussian process modeling with input location error. We show that unlike Gaussian process modeling without location error, the mean squared prediction error (MSPE) of these three predictors does not converge to zero as the sample size goes to infinity. Furthermore, we show that the limiting MSPE of KALEN and stochastic Kriging are equal if an unobserved point has intrinsic noise. We obtain an asymptotic upper bound on the MSPE of KALE and stochastic Kriging if there is no noise at an unobserved point. {This upper bound is small if the intrinsic noise at observed points is small.} Numeric results indicate that if the sample size is relatively small and noise is relatively large, KALE or KALEN have a much smaller MSPE, and thus are desirable, compared with stochastic Kriging. {If the sample size is large or the noise is quite small, then the performance of all three approaches is similar.} We also compare the performance of KALEN and stochastic Kriging in the modeling of a composite parts assembly process problem. We find that the KALEN and stochastic Kriging are comparable across a range of small intrinsic noise levels, corresponding to a range of actuator tolerances, which is consistent with the theoretical analysis.  

The remainder of this article is structured as follows. In Section \ref{GPwithLE}, we formally state the problem, introduce KALE and KALEN, and show some asymptotic properties of the MSPE of KALE and KALEN. Section \ref{sec:CompareNugg} presents some theoretical results when using stochastic Kriging in the prediction of Gaussian processes with input location error. Parameter estimation methods are discussed in Section \ref{sec:ParaEst}. Numeric results are presented in Section \ref{secNumerical}. A case study of the composite parts assembly process is considered in Section \ref{sec:casestudy}. Technical details are given in the Appendix.

\section{Gaussian Processes with Input Location Error}\label{GPwithLE}
In this section, we introduce two predictors of Gaussian processes with input location error, KALE and KALEN. We also give several asymptotic properties of KALE and KALEN.

\subsection{Two Predictors of Gaussian Processes with Input Location Error}\label{sec:KALEmodel}

Suppose $f$ is an underlying function defined on $\mathbb{R}^d$, and the values of $f$ on a convex and compact set $\Omega$ are of interest. Suppose we observe the responses $f(x_1),\ldots, f(x_n)$ on $X = \{x_1,\ldots, x_n\}\subset \Omega$. Following the terminology in design of experiments \cite{wu2011experiments}, we call $X = \{x_1,\ldots, x_n\}$ design points. A standard tool to build emulators based on observed data is Gaussian process modeling (see \cite{fang2005design} and \cite{santner2003design}, for example).  {In Gaussian process modeling, the underlying function $f$ is assumed to be a realization of a Gaussian process $Z$. From this point of view, we shall not differentiate $f$ and $Z$ in the rest of this work. We suppose $f$ is \textit{stationary}, which means that the covariance of $f(x)$ and $f(x')$ depends only on the difference $x-x'$ between the two input variables $x$ and $x'$. We further assume $\text{Cov}(f(x),f(x'))=\sigma^2 \Psi(x - x'),$ where $\sigma^2$ is the variance, and $\Psi$ is the correlation function. Then $\Psi$ should be positive definite and satisfy $\Psi(0)=1$. In Gaussian process modeling, one can assume that the mean of $f$ is zero, a constant, or a linear combination of known functions. The corresponding methods are referred to as simple Kriging, ordinary Kriging, and universal Kriging, respectively. Ordinary Kriging and universal Kriging are more flexible and may improve the prediction performance, but the estimation of the mean function can introduce more uncertainties. Moreover, Theorem 3 of \cite{wang2019prediction} suggests that the estimation of the mean function can be inconsistent. These uncertainties and inconsistency make the theoretical analysis more cumbersome, and dilute the focus of the overall analysis. Therefore, for ease of mathematical treatment,} we assume the mean of $f$ is zero in theoretical development, which is equivalent to removing the mean surface. {Nevertheless, we use a non-zero mean function in numeric studies and case study to improve the prediction performance by introducing more degrees of freedom.} 

For a Gaussian process with input location error, the inputs are corrupted by noise. In this paper, we mainly focus on the intrinsic error and assume the responses are not influenced by the extrinsic error. {It is worth noting that this assumption can be relaxed, and the Gaussian process with both intrinsic error and extrinsic error can be analyzed in a similar manner, as stated in Remark \ref{rem:rem1}.} Specifically, suppose the responses are perturbed by the intrinsic error, that is, we observe {$y_j = f(x_j+\epsilon_j)$} for $x_j\in X$, where the $\epsilon_j$'s are i.i.d. random {vectors with mean $0$, and have a probability density function $p(\cdot)$. It is possible to have replicates on some design points, i.e., for some $j\neq k$, $x_j = x_k$ for $x_j,x_k\in X$ but $\epsilon_j\neq \epsilon_k$. 
We assume $p(\cdot)$ is continuous and each element of $\epsilon_j$ has finite variance.}

Following the approach in \cite{cressie2003spatial}, the best linear unbiased predictor of $f(x)$ on an unobserved point $x$ is given by
\begin{align}\label{pyx}
    Q(Y;x) = \alpha_1^TY + \alpha_2,
\end{align}
where $\alpha_1 \in \mathbb{R}^n,\alpha_2\in \mathbb{R}$ are the solution to the optimization problem
\begin{align}\label{mintoblup}
   \min_{(\alpha_1,\alpha_2)} \mathbb{E}(f(x) -  Q(Y;x))^2 = \min_{(\alpha_1,\alpha_2)} \mathbb{E}(f(x) -  \alpha_1^TY - \alpha_2)^2,
\end{align}
and the responses on the design points are $Y= (y_1,\ldots,y_n)^T$. By minimizing \eqref{mintoblup} with respect to $(\alpha_1,\alpha_2)$, we obtain the solution to \eqref{mintoblup} is $\alpha_1 = K^{-1}r(x)$ and $\alpha_2 = 0$, where $r(x) = (r(x,x_1),\ldots,r(x,x_n))^T$ denotes the covariance vector between $f(x)$ and $Y$ with  
\begin{align}\label{eq:RvectorNoi}
r(x,x_j) =\mathbb{E}(f(x)y_j) = \sigma^2\int\Psi(x-(x_j+\epsilon_j))p(\epsilon_j)d\epsilon_j,
\end{align}
and $K=(K_{jk})_{jk}$ denotes the covariance matrix with 
\begin{align}\label{eq:KernelMatrixNoi}
K_{jk} = \mathbb{E}(y_jy_k)=\left\{\begin{array}{ll} 
\sigma^2\Psi(x_j-x_j), & j=k, \\ 
\sigma^2\iint\Psi(x_j+\epsilon_j-(x_k+\epsilon_k))p(\epsilon_j)p(\epsilon_k)d\epsilon_j d\epsilon_k, & j\neq k.\end{array}\right.
\end{align} 
Plugging $\alpha_1 = K^{-1}r(x)$ and $\alpha_2 = 0$ into \eqref{pyx}, we find the best linear unbiased predictor of $f(x)$ is 
\begin{align}\label{BLUPNoN}
\hat f(x) = r(x)^TK^{-1}Y.
\end{align}
{\begin{remark}\label{rem:rem1}
If the observations also have i.i.d. distributed extrinsic noise with mean zero and finite variance $\sigma^2_\delta$, we only need to replace $\mathbb{E}(y_jy_j) = \sigma^2\Psi(x_j-x_j)$ by $\mathbb{E}(y_jy_j) = \sigma^2\Psi(x_j-x_j) + \sigma^2_\delta$, and the rest of the theoretical analysis remains similar. Our theoretical analysis can also be generalized to the case that $\epsilon_i$'s are independent but not identically distributed. Although these generalizations do not influence the theoretical development a lot, they could dilute the main focus of this paper. Therefore, we focus on the Gaussian processes with only i.i.d. intrinsic noise.
\end{remark}}

In \cite{cressie2003spatial} equation (\ref{BLUPNoN}) is referred to as Kriging adjusting for location error (KALE). If the prediction of $y(x)$ on an unobserved point $x$ with intrinsic noise is of interest, it can be shown that we only need to replace $r(x)$ in (\ref{BLUPNoN}) by $r_N(x)= (r_N(x,x_1),\ldots,r_N(x,x_n))^T$, where 
\begin{align}\label{eq:RvectorNoiX}
r_N(x,x_j) =\sigma^2\iint\Psi(x+\epsilon-(x_j+\epsilon_j))p(\epsilon_j)p(\epsilon)d\epsilon_jd\epsilon.
\end{align}
We refer to the corresponding best linear unbiased predictor $\hat y(x) = r_N(x)^TK^{-1}Y$ as Kriging adjusting for location error and noise (KALEN). One direct relation between KALE and KALEN is $\hat y(x) = \int \hat f(x+\epsilon) p(\epsilon) d\epsilon$.

In some cases, there exist closed forms of the integrals in (\ref{eq:RvectorNoi})--(\ref{eq:RvectorNoiX}). For example, if the correlation function $\Psi(s-t) = \exp(-\theta\|s - t\|_2^2)$, and the noise $\epsilon \sim N(0,\sigma_\epsilon^2I_d)$, where $\theta > 0$ is the correlation parameter, and $N(0,\sigma_\epsilon^2I_d)$ is a mean zero normal distribution with covariance matrix $\sigma_\epsilon^2I_d$, then (\ref{eq:RvectorNoi})--(\ref{eq:RvectorNoiX}) can be calculated respectively as \cite{cervone2015gaussian}
\begin{align}\label{eq:KernelMnormal}
K_{jk}=\left\{\begin{array}{ll} \sigma^2 & j=k, \\ 
\frac{\sigma^2}{(1+4\sigma^2_\epsilon\theta)^{d/2}}e^{\frac{-\theta\|x_j-x_k\|_2^2}{1+4\sigma^2_\epsilon\theta}} & j\neq k,\end{array}\right.\nonumber\\
r(x,x_j)=\frac{\sigma^2}{(1+2\sigma^2_\epsilon\theta)^{d/2}}e^{\frac{-\theta\|x-x_j\|_2^2}{1+2\sigma^2_\epsilon\theta}},\nonumber\\
r_N(x,x_j)=\frac{\sigma^2}{(1+4\sigma^2_\epsilon\theta)^{d/2}}e^{\frac{-\theta\|x-x_j\|_2^2}{1+4\sigma^2_\epsilon\theta}}.
\end{align}
{We also include the calculation of \eqref{eq:KernelMnormal} in Appendix \ref{app:eq8} for readers' reference.}

Unfortunately, in general, equations (\ref{eq:RvectorNoi})--(\ref{eq:RvectorNoiX}) are intractable and need to be calculated via Monte Carlo integration by sampling $\epsilon_j$'s from $p(\cdot)$, which can be computationally expensive. For example, if we choose the Mat\'ern correlation function, then \eqref{BLUPNoN} does not have a closed form. In this case, the calculation of \eqref{BLUPNoN} will require much time, as we will see in Section \ref{secNumerical}. 

\subsection{The Mean Squared Prediction Error of KALE and KALEN}\label{MSPE}

Now we consider the mean squared prediction error (MSPE) of KALE and KALEN. The MSPE of KALE can be calculated by
\begin{align}\label{eq:MSPEKALE}
\mathbb{E}(f(x) - \hat{f}(x))^2 & = \mathbb{E}(f(x) - r(x)^TK^{-1}Y)^2\nonumber\\
& = \mathbb{E}(f(x)^2) - 2r(x)^TK^{-1}\mathbb{E}(f(x)Y)+ r(x)^TK^{-1}\mathbb{E}(YY^T)K^{-1}r(x)\nonumber\\
& = \sigma^2\Psi(x-x) - r(x)^TK^{-1}r(x),
\end{align}
where $\hat{f}$ is as in \eqref{BLUPNoN}, and $r$ and $K$ are as defined in (\ref{eq:RvectorNoi}) and (\ref{eq:KernelMatrixNoi}), respectively. The last equality is true because of \eqref{eq:RvectorNoi} and \eqref{eq:KernelMatrixNoi}. Note that $\Psi(x-x) = \Psi(0)=1$. Similarly, one can check the MSPE of KALEN is
\begin{align}\label{eq:MSPEKALEN}
\mathbb{E}(y(x) - \hat{y}(x))^2 = \sigma^2\Psi(x-x) - r_N(x)^TK^{-1}r_N(x),
\end{align}
where $r_N$ is as defined in (\ref{eq:RvectorNoiX}).

Define 
\begin{align}\label{eq:newKernel}
\Psi_S(s-t) = \iint\Psi(s+\epsilon_1-(t+\epsilon_2))p(\epsilon_1)p(\epsilon_2)d\epsilon_1 d\epsilon_2.
\end{align}

In Proposition 3.1 of \cite{cervone2015gaussian}, it is shown that if a function $c(s,t) = \Psi_S(s-t)$ for $s\neq t$ and $c(s,s) = \Psi(s-s)$, then $c(\cdot,\cdot)$ is a valid correlation function. Therefore, the covariance matrix $K$ defined in (\ref{eq:KernelMatrixNoi}) is positive definite. We first consider the asymptotic properties of \eqref{eq:MSPEKALEN} as the fill distance goes to zero, where the fill distance $h_X$ of the design points $X$ is defined by
\begin{equation}\label{filldistance}
h_{X}:=\sup_{x\in\Omega}\min_{x_j\in X}\| x- x_j\|_2.
\end{equation}
Notice that the MSPE of KALEN can be expressed as
\begin{align}\label{decomex}
\mathbb{E}(y(x) - \hat{y}(x))^2 & = \sigma^2\Psi(x-x) - r_N(x)K^{-1}r_N(x)\nonumber\\
                       & = \sigma^2(\Psi(x-x) - \Psi_S(x-x)) + \sigma^2\Psi_S(x-x) - r_N(x)K^{-1}r_N(x).
\end{align}
Let $K_S = \sigma^2(\Psi_S(x_j-x_k))_{jk}$. Thus, $K = K_S + \sigma^2(\Psi(x-x) - \Psi_S(x-x))I_n$, where $I_n$ is an identity matrix. In the rest of Section \ref{GPwithLE} and Section \ref{sec:CompareNugg}, we assume the correlation function $\Psi$ satisfies the following assumption.
\begin{assum}\label{assumpsi}
The correlation function $\Psi$ is a radial basis function, i.e., $\Psi(s-t) = \phi(\|s-t\|_2)$ for $s,t\in \Omega$. Furthermore, $\phi(r)>0$ is a strictly decreasing function of $r\in \mathbb{R}^+$, with $\phi(0) = 1$. {The reproducing kernel Hilbert space generated by $\Psi$ can be embedded into a Sobolev space $H^m(\Omega)$ with $m> d/2$.}
\end{assum}
{
\begin{remark}
For a brief introduction to the reproducing kernel Hilbert space, see Appendix \ref{app:introrkhs}. 
\end{remark}
}

Many widely used correlation functions, including isotropic Gaussian correlation functions and isotropic Mat\'ern correlation functions, satisfy this assumption. {See Appendix \ref{app:introrkhs} for details.} For anisotropic correlation functions that have form $\Psi(s-t) = \phi(\|A(s-t)\|_2)$ with $A$ a diagonal positive definite matrix and $s,t\in \Omega$, we can stretch the space $\Omega$ to $\Omega'$ such that $\Psi(s'-t') = \phi(\|s'-t'\|_2)$ for $s',t'\in \Omega'$. Assumption \ref{assumpsi} implies $\Psi_S(x-x)< \Psi(x-x)$. Intuitively $K$ is equal to a covariance matrix plus a nugget parameter {equal to $\sigma^2(\Psi(x-x) - \Psi_S(x-x))$}. In order to justify this intuition, we need to show that $K_S$ is a covariance matrix, which follows from the fact that $\Psi_S$ is a positive definite function, as stated in the following lemma whose proof is given in Appendix \ref{App:pflemma}.
\begin{lemma}\label{lemma1}
{Suppose Assumption \ref{assumpsi} holds.} Then $\Psi_S$ is a positive definite function.
\end{lemma}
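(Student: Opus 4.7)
The plan is to leverage Bochner's theorem: since $\Psi$ is a continuous positive definite function on $\mathbb{R}^d$ (it is the correlation function of the stationary Gaussian process $f$), it admits a spectral representation
$$\Psi(u)=\int_{\mathbb{R}^d}e^{i\omega^\top u}\,d\mu(\omega)$$
for some nonnegative finite Borel measure $\mu$ on $\mathbb{R}^d$. The task reduces to producing an analogous representation for $\Psi_S$ with a nonnegative measure, because the converse direction of Bochner's theorem then immediately delivers positive definiteness.

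To carry this out, I would substitute the spectral representation into \eqref{eq:newKernel}, apply Fubini's theorem (justified by $|\Psi|\le 1$, finiteness of $\mu$, and $p$ being a probability density), and separate the $\epsilon_1$ and $\epsilon_2$ integrals. This produces
$$\Psi_S(u)=\int_{\mathbb{R}^d}e^{i\omega^\top u}\,|\varphi(\omega)|^2\,d\mu(\omega),$$
where $\varphi(\omega)=\int e^{i\omega^\top\epsilon}p(\epsilon)\,d\epsilon$ is the characteristic function of the intrinsic noise. Since $|\varphi|\le 1$ and $\mu$ is finite, $d\nu:=|\varphi|^2\,d\mu$ is itself a nonnegative finite Borel measure, so Bochner's theorem identifies $\Psi_S$ as positive definite. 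The one piece of bookkeeping is checking that this representation yields a real, symmetric function: this is inherited from $\Psi$ being real and even (the radial form in Assumption \ref{assumpsi} forces $\mu$ to be symmetric under $\omega\mapsto-\omega$), together with the automatic evenness of $|\varphi|^2$, so $d\nu$ is symmetric and its Fourier transform is real.

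The main potential obstacle is really just whether one wishes to avoid invoking Bochner; a purely probabilistic alternative is available and would be my fallback. Let $Z$ be a zero-mean Gaussian process on $\mathbb{R}^d$ with covariance $\Psi$, let $\epsilon_1,\ldots,\epsilon_n$ be i.i.d.\ copies of $\epsilon$ independent of $Z$, and set $W_j:=Z(x_j+\epsilon_j)$. A direct conditioning argument using the tower property gives $\mathrm{Cov}(W_j,W_k)=\Psi_S(x_j-x_k)$, so $(\Psi_S(x_j-x_k))_{j,k}$ is literally a covariance matrix and hence positive semidefinite; this bypasses Bochner entirely and makes the conclusion conceptually transparent, at the cost of constructing an auxiliary process.
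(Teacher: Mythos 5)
Your main argument is essentially the paper's own proof: both substitute the spectral representation of $\Psi$ into \eqref{eq:newKernel}, apply Fubini to factor out the noise characteristic function, and conclude that the spectral measure of $\Psi_S$ is $|b(t)|^2$ times that of $\Psi$, hence nonnegative --- the paper writes the quadratic form as $(2\pi)^{-d/2}\int_{\mathbb{R}^d}\big|\sum_j w_j e^{i\langle x_j,t\rangle}\big|^2\,|b(t)|^2\,\mathcal{F}(\Psi)(t)\,dt \ge 0$, which is your $d\nu=|\varphi|^2 d\mu$ in density form. One caution about your probabilistic fallback: with $W_j=Z(x_j+\epsilon_j)$ the diagonal is $\mathrm{Var}(W_j)=\Psi(0)$, not $\Psi_S(0)$, so $(\Psi_S(x_j-x_k))_{jk}$ is \emph{not} the covariance matrix of $(W_1,\ldots,W_n)$; the correct construction is the smoothed process $W_j=\int Z(x_j+e)\,p(e)\,de$, whose covariance equals $\Psi_S(x_j-x_k)$ for all $j,k$ including $j=k$ (and either probabilistic route only yields positive semidefiniteness, whereas the paper's Fourier computation is also what underpins its strictness claim).
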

In order to study the asymptotic performance of KALE and KALEN, we consider a sequence of designs $X_m$, $m=1,2,\ldots$. We assume the following.
\begin{assum}\label{assumX}
{The sequence of design points $X_m = \{x_1,\ldots,x_{n_m}\}$ satisfies that there exists a constant $C>0$ such that $h_{X_m}\leq Cq_{X_m}$ for all $m$, where $$q_{X_m}=\min_{x_j\neq x_k, x_j,x_k\in X_m}\|x_j - x_k\|_2/2,$$ {and $h_{X_m}$ is the fill distance of $X_m$ defined by \eqref{filldistance}}.}
\end{assum}
{\begin{remark}\label{rem:noreplicates}
Assumption \ref{assumX} implies that the \emph{distinct} design points are well separated. 
\end{remark}}
It is not hard to find designs satisfy this assumption. For example, grid designs satisfy Assumption \ref{assumX}. In the rest of paper we suppress the dependence of $X$ on $m$ for notational simplicity. It can be shown that if a Gaussian process has no intrinsic noise, then the MSPE of the corresponding best linear unbiased predictor converges to zero as the fill distance goes to zero {(see Lemma \ref{lemma2skzero} in Appendix \ref{app:A})}. Unlike a Gaussian process without input location error, we show that the limit of the MSPE of KALE and KALEN are usually not zero. In fact, \eqref{decomex} and Lemma \ref{lemma1} imply that the MSPE of KALEN is the MSPE of a Gaussian process with extrinsic error plus a non-zero constant. These results are stated in Theorem \ref{propMSPEKALEN}, whose proof is provided in Appendix \ref{App:pfThmkalen}. 

\begin{theorem}\label{propMSPEKALEN}
{Suppose Assumptions \ref{assumpsi} and \ref{assumX} hold.} The MSPE of KALEN \eqref{eq:MSPEKALEN} converges to $\sigma^2(\Psi(x-x) - \Psi_S(x-x))$ as the fill distance of the design points $h_X$ converges to zero, where $\Psi_S$ is defined in \eqref{eq:newKernel}.
\end{theorem}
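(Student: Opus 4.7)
The plan is to exploit the decomposition of $K$ already written in the excerpt, namely $K = K_S + \tau^2 I_n$ with $\tau^2 := \sigma^2(\Psi(0)-\Psi_S(0))$, and to recognize that the MSPE of KALEN is, up to the additive constant $\tau^2$, exactly the MSPE of a stochastic-Kriging predictor with covariance kernel $\sigma^2 \Psi_S$ and homogeneous nugget $\tau^2$.

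First I would rewrite the cross-covariance vector $r_N$ in terms of $\Psi_S$: comparing \eqref{eq:RvectorNoiX} with \eqref{eq:newKernel} gives $r_N(x,x_j)=\sigma^2\Psi_S(x-x_j)$ for every $j$, and similarly $K_{jk}=\sigma^2\Psi_S(x_j-x_k)$ for $j\neq k$ while $K_{jj}=\sigma^2\Psi(0)$. Hence $K = K_S + \tau^2 I_n$ as claimed. By Lemma \ref{lemma1}, $\Psi_S$ is positive definite, so $K_S$ is a bona fide covariance matrix and $K$ is the covariance matrix of a process with covariance $\sigma^2\Psi_S$ observed under i.i.d.\ extrinsic noise of variance $\tau^2$.

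Next I would take the decomposition \eqref{decomex} at face value: the MSPE of KALEN equals
\begin{equation*}
\sigma^2(\Psi(0)-\Psi_S(0)) \;+\; \bigl[\sigma^2\Psi_S(0) - r_N(x)^T K^{-1} r_N(x)\bigr].
\end{equation*}
The first summand is precisely the claimed limit, so everything reduces to showing that the bracketed quantity vanishes as $h_X\to 0$. But this bracketed quantity is formally identical to the MSPE of the BLUP for a zero-mean stationary Gaussian process with covariance $\sigma^2\Psi_S$ observed with independent extrinsic noise of variance $\tau^2$ at the design $X$, targeting the noise-free value at $x$. Assumption \ref{assumpsi} (RKHS generated by $\Psi$ embeds into $H^m(\Omega)$ with $m>d/2$) transfers to $\Psi_S$ because $\Psi_S$ is obtained from $\Psi$ by a smoothing convolution, and Assumption \ref{assumX} gives a quasi-uniform design with $n_m\to\infty$. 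I would then invoke the stochastic-Kriging convergence statement advertised in the text as Lemma \ref{lemma2skzero} (in Appendix \ref{app:A}) to conclude that this bracketed quantity tends to zero.

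The main obstacle, and the only place where genuine work is needed beyond bookkeeping, is the step that certifies the bracketed term goes to zero. Morally, this is the classical RKHS fill-distance bound on the power function, perturbed by a nugget: one would construct a coefficient vector $\alpha\in\mathbb{R}^n$ (for instance the noise-free Kriging weights with kernel $\Psi_S$) and control $\sigma^2\Psi_S(0) - 2\alpha^T r_N(x) + \alpha^T K \alpha$ via (i) the noise-free power-function bound $\mathcal{O}(h_X^{2m-d})$ for kernels generating $H^m(\Omega)$ with $m>d/2$, and (ii) a bound on the $\ell_2$-norm of $\alpha$ times $\tau^2$ that Assumption \ref{assumX} keeps under control because the separated design prevents the cardinal-function coefficients from blowing up. Since that bound is packaged in Lemma \ref{lemma2skzero}, my proof would simply cite it and add the constant $\sigma^2(\Psi(0)-\Psi_S(0))$ to obtain the theorem.
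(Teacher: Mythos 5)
Your proposal is correct and takes essentially the same route as the paper's proof: both use the decomposition \eqref{decomex}, recognize the residual term $\sigma^2\Psi_S(x-x) - r_N(x)^TK^{-1}r_N(x)$ as the MSPE of a stochastic-Kriging (noise-free target) predictor for a process with covariance $\sigma^2\Psi_S$ and nugget $\sigma^2(\Psi(x-x)-\Psi_S(x-x))$, and send it to zero via Lemma \ref{lemma2skzero}. The paper merely phrases this by introducing the auxiliary noisy process $y_S$ in \eqref{eq:stomoding} and then setting $\mu=\sigma^2(\Psi(x-x)-\Psi_S(x-x))$, which matches your bookkeeping; your added remark that Assumption \ref{assumpsi} transfers to $\Psi_S$ (since $\mathcal{F}(\Psi_S)\leq\mathcal{F}(\Psi)$) is a detail the paper leaves implicit when applying the lemma with kernel $\Psi_S$.
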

In Theorem \ref{propMSPEKALEN}, we present a limit of the MSPE of KALEN. The limit $\sigma^2(\Psi(x-x) - \Psi_S(x-x))$ is usually not zero. This is expected for KALEN since there is a random error at the unobserved point $x$. The MSPE limit depends on two parts. One is the variance $\sigma^2$ and the other is the difference $\Psi(x-x) - \Psi_S(x-x)$. The variance $\sigma^2$ depends on the underlying process, while the difference depends on the probability density function of the noise $p(\cdot)$. Roughly speaking, the difference $\Psi(x-x) - \Psi_S(x-x)$ will be larger if the density $p(\cdot)$ is more spread out.




\section{Comparison Between KALE/KALEN and Stochastic Kriging}\label{sec:CompareNugg}

It is argued in \cite{cressie2003spatial} and \cite{stein1999interpolation} that using a nugget parameter is one way to counteract the influence of noise within the inputs. Therefore, it is natural to ask whether stochastic Kriging {(i.e., Kriging with a nugget parameter)} is a good approximation method to predict the value at an unobserved point, since it is not the best linear unbiased predictor under the settings of Gaussian process with input location error. In this paper, we show that the MSPE of stochastic Kriging has the same limit as the MSPE of KALEN, and provide an upper bound on the MSPE of stochastic Kriging if the unobserved point has no noise, as stated in Theorem \ref{Thm:main}. The proof can be found in Appendix \ref{App:pfThmmain}. 
\begin{theorem}\label{Thm:main}
{Suppose Assumptions \ref{assumpsi} and \ref{assumX} hold.} Let $\mu>0$ be {any fixed} constant. A stochastic Kriging predictor of a Gaussian process with input location error is defined as
\begin{eqnarray}\label{eq:BLUPSK1}
\hat f_S(x) = \Psi(x-X)(\Psi(X-X)+\mu I_n)^{-1}Y,
\end{eqnarray}
where $\Psi(x-X) = (\Psi(x-x_1),\ldots,\Psi(x-x_n))^T$ and $\Psi(X-X) = (\Psi(x_j-x_k))_{jk}$. 

(i) Suppose there is noise at an unobserved point. 
The MSPE of the predictor \eqref{eq:BLUPSK1} has the same limit as KALEN, which is $\sigma^2(\Psi(x-x) - \Psi_S(x-x))$, where $\Psi_S$ is as defined in (\ref{eq:newKernel}), when the fill distance of $X$ goes to zero.

(ii) Suppose there is no noise at an unobserved point. An asymptotic upper bound on the MSPE of the predictor \eqref{eq:BLUPSK1} is
\begin{align}\label{eq:uppBKALE}
\frac{1.04\sigma^2}{(2\pi)^{d/2}}\int_{\mathbb{R}^d} \big|1-|b(t)|\big|^2\mathcal{F}(\Psi)(t)dt,
\end{align}
where $\mathcal{F}(\Psi)$ is the Fourier transform of $\Psi$ and $b(t) = \mathbb{E}(e^{i\epsilon^Tt})$ is the characteristic function of $p(\cdot)$. 
\end{theorem}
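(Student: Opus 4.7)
The plan is to exploit the structural decomposition
\[
K = \sigma^2\Psi_S(X-X) + \sigma^2\bigl(\Psi(0)-\Psi_S(0)\bigr)I_n,
\]
which follows from the definition of $K$ in \eqref{eq:KernelMatrixNoi} together with Lemma \ref{lemma1}. This exhibits the covariance of $Y$ as that of a stationary process with covariance kernel $\Psi_S$ plus an i.i.d.\ nugget of size $\sigma^2(\Psi(0)-\Psi_S(0))$, which connects stochastic Kriging (built from $\Psi$) to the ``effective'' kernel $\Psi_S$ used by KALEN. This decomposition is the common thread through both parts.

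For part (i), the lower bound is immediate: KALEN is the BLUP of $y(x)$ from $Y$ while $\hat f_S$ is also a linear unbiased predictor of $y(x)$, so $\mathbb{E}(y(x)-\hat f_S(x))^2 \ge \mathbb{E}(y(x)-\hat y(x))^2$, and Theorem \ref{propMSPEKALEN} gives the liminf $\sigma^2(\Psi(0)-\Psi_S(0))$. For the matching upper bound, orthogonality of the BLUP residual to any linear function of $Y$ yields
\[
\mathbb{E}(y(x) - \hat f_S(x))^2 = \mathbb{E}(y(x) - \hat y(x))^2 + \mathbb{E}(\hat y(x) - \hat f_S(x))^2,
\]
so I only need $\mathbb{E}(\hat y(x) - \hat f_S(x))^2 \to 0$ as $h_X\to 0$. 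Expanding the second-moment quadratic form using $r_N(x) = \sigma^2\Psi_S(x-X)^T$ and the decomposition of $K$, it reduces to showing $\Psi(x-X)\,A\,g(X) \to g(x)$ for $g$ in the RKHS of $\Psi$, applied to both $g(\cdot)=\Psi_S(x-\cdot)$ and $g(\cdot)=\Psi(x-\cdot)$. The RKHS containment $\Psi_S(x-\cdot)\in\mathcal{H}(\Psi)$ follows from $\mathcal{F}(\Psi_S)=|b|^2\mathcal{F}(\Psi)\le\mathcal{F}(\Psi)$, and the pointwise limit follows from standard ridge-regression approximation theory in the style of Lemma \ref{lemma2skzero}.

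For part (ii), where we target $f(x)$ with no noise at $x$, I split $Y = F + (Y-F)$ with $F = (f(x_1),\ldots,f(x_n))^T$, so that
\[
\hat f_S(x) = \hat f_{S,0}(x) + \Psi(x-X)\,A\,(Y-F), \qquad \hat f_{S,0}(x) := \Psi(x-X)\,A\,F,
\]
where $A = (\Psi(X-X)+\mu I_n)^{-1}$. The elementary inequality $(a+c)^2 \le (1+\rho)a^2 + (1+1/\rho)c^2$ then splits $\mathbb{E}(f(x)-\hat f_S(x))^2$ into (a) the noiseless-SK error $(1+\rho)\mathbb{E}(f(x)-\hat f_{S,0}(x))^2$, which tends to zero as $h_X \to 0$ by Lemma \ref{lemma2skzero}, and (b) the noise contribution $(1+1/\rho)\Psi(x-X)A\,\Sigma\,A\,\Psi(X-x)$ with $\Sigma := \Cov(Y-F)$. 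Direct computation gives off-diagonal entries of $\Sigma$ equal to $\sigma^2(\Psi - \tilde\Psi - \overline{\tilde\Psi} + \Psi_S)(x_j - x_k)$ with $\tilde\Psi := \Psi * p$, whose spectral density is $\sigma^2|1-b(t)|^2\mathcal{F}(\Psi)(t)$. Bounding the quadratic form via a Bochner/spectral representation of the ridge operator $A$, and sending $\rho\to 0$ (its slack absorbed into the constant $1.04$), yields the bound with $|1-b(t)|^2$; the sharper $|1-|b(t)||^2$ comes from using that $\Psi$ is radial so $\mathcal{F}(\Psi)$ is real and nonnegative, permitting a phase-cancellation step that replaces $b(t)$ by $|b(t)|$.

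The main obstacle is the tight spectral bound in part (ii): obtaining the integrand $|1-|b(t)||^2$ rather than the more immediate $|1-b(t)|^2$ and recovering the specific leading constant $1.04$. I expect this step to require a uniform-in-$h_X$ comparison of the discrete ridge operator $A$ to its continuum analogue $(\mathcal{T}_\Psi + \mu I)^{-1}$, together with a phase-averaging argument exploiting that any complex phase in $b(t)$ corresponds to a deterministic shift of $\epsilon$ which does not affect the predictor's MSPE. By contrast, the pointwise-limit computations needed for part (i) and the noiseless-SK convergence in part (ii)(a) should follow from standard kernel ridge regression asymptotics under the Sobolev embedding guaranteed by Assumption \ref{assumpsi}.
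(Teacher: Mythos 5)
Your part (ii) is where the genuine gap lies, and it is exactly the step you flag: as you set it up, the noise contribution has covariance with spectral density $\sigma^2|1-b(t)|^2\mathcal{F}(\Psi)(t)$, so your split delivers an asymptotic bound with integrand $|1-b(t)|^2$, not the stated $\big|1-|b(t)|\big|^2$ (which is smaller, since $|1-|b||\le|1-b|$; they agree only when $b$ is real and nonnegative, e.g.\ symmetric noise). Neither of your proposed devices closes this gap: a nonconstant phase of $b(t)$ corresponds to a deterministic shift of $\epsilon$ only when that phase is \emph{linear} in $t$, so the ``phase-averaging/shift'' argument is unsound for general mean-zero noise; and no uniform comparison of the discrete ridge operator with a continuum analogue is needed or used in the paper. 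The paper's mechanism is much simpler and is essentially your split carried out pointwise in $t$ inside the spectral integral: for any weights $u$ the MSPE equals $(2\pi)^{-d/2}\int|\sum_j u_je^{i\langle x_j,t\rangle}b(t)-e^{i\langle x,t\rangle}|^2\mathcal{F}(\Psi)(t)dt+a\|u\|_2^2$ with $a=\Psi(0)-\Psi_S(0)$; one applies $|A+B|^2\le(1+C^2)|A|^2+(1+C^{-2})|B|^2$ with $C^2=25$ (whence $1.04=1+C^{-2}$), plugs in the stochastic-Kriging weights, and kills the $(1+C^2)$-weighted term and $a\|u\|_2^2$ via Lemma \ref{lemma2skzero} after absorbing $a\|u\|_2^2$ into $\mu\|u\|_2^2$ through $\max\{\cdot,a/\mu\}$. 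Also note your constant bookkeeping is reversed: you must take $\rho$ large and fixed (e.g.\ $1+1/\rho=1.04$, i.e.\ $\rho=25$), not $\rho\to0$, since $(1+\rho)$ multiplies the term that vanishes with $h_X$ while $(1+1/\rho)$ multiplies the persistent noise term.

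Part (i) is organized differently from the paper and is mostly sound: the BLUP lower bound plus Theorem \ref{propMSPEKALEN}, and the Pythagorean identity, are valid because the KALEN residual is uncorrelated with every linear function of $Y$. However, your stated reduction of $\mathbb{E}(\hat y(x)-\hat f_S(x))^2\to0$ to ``$\Psi(x-X)Ag(X)\to g(x)$ for $g=\Psi_S(x-\cdot)$ and $g=\Psi(x-\cdot)$'' does not cover the quadratic piece $\mathbb{E}\hat f_S(x)^2=\sigma^2\big(u^T\Psi_S(X-X)u+a\|u\|_2^2\big)$ with $u=A\Psi(X-x)$, which is not of that form. The missing facts, $\|u\|_2^2\to0$ and $u^T\Psi_S(X-X)u\to\Psi_S(0)$, do follow from $\mu\|u\|_2^2+\|\Psi(x-\cdot)-\sum_ju_j\Psi(x_j-\cdot)\|_{\mathcal{N}_\Psi}^2=\Psi(0)-\Psi(x-X)A\Psi(X-x)\to0$ together with $\mathcal{F}(\Psi_S)=|b|^2\mathcal{F}(\Psi)\le\mathcal{F}(\Psi)$, so this gap is fixable with the same spectral tools; the paper sidesteps it by bounding the full MSPE quadratic form of the stochastic-Kriging weights directly (using $c(t)\le1$ and the same $\max\{1,a/\mu\}$ absorption) rather than comparing the two predictors.
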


\begin{remark}
We say $b$ is an asymptotic upper bound on a sequence $a_n$, if there exists a sequence $b_n$ such that $a_n\leq b_n$ for all $n=1,2,\ldots$, and $\lim_{n\rightarrow \infty} b_n= b$.
\end{remark}

{\begin{remark}
The constant 1.04 in \eqref{eq:uppBKALE} is not essential. It can be changed to any constant greater than one, but a larger constant leads to a ``slower'' convergence speed.
\end{remark}

\begin{remark}
Note that KALE is the best linear unbiased predictor when an unobserved point has no noise. Therefore, the upper bound of MSPE for stochastic Kriging is also an upper bound of MSPE for KALE. For an illustration of the upper bound and lower bound of the MSPE of KALE, see Example \ref{explgauss}.
\end{remark}
}

Theorem \ref{Thm:main} shows that the predictor \eqref{eq:BLUPSK1} is as good as KALEN asymptotically. The following proposition states that if the noise is small, then \eqref{eq:uppBKALE} can be controlled. The proof of Proposition \ref{Upprop} can be found in Appendix \ref{pfUpprop}.

\begin{prop}\label{Upprop}
Suppose Assumption \ref{assumpsi} holds, and $\{\epsilon_n\}$ is a sequence of independent random vectors that converges to $0$ in distribution. Let 
\begin{align}
a_n = \frac{\sigma^2}{(2\pi)^{d/2}}\int_{\mathbb{R}^d} \big|1-|b_n(t)|\big|^2\mathcal{F}(\Psi)(t)dt,
\end{align}
where $b_n(t) = \mathbb{E}(e^{i\epsilon_n^Tt})$. Then $a_n$ converges to zero.
\end{prop}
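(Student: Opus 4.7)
The plan is to invoke the dominated convergence theorem on the integrand $\bigl|1-|b_n(t)|\bigr|^2\mathcal{F}(\Psi)(t)$. Two ingredients are needed: pointwise convergence to zero, and an integrable envelope. Both are elementary once the right classical facts are combined, so the task is mainly to identify them cleanly.

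For pointwise convergence, I would appeal to the easy direction of L\'evy's continuity theorem: since $\epsilon_n\to 0$ in distribution, the characteristic functions satisfy $b_n(t)=\mathbb{E}(e^{i\epsilon_n^T t})\to \mathbb{E}(e^{i\,0^T t})=1$ for every fixed $t\in\mathbb{R}^d$. Passing to moduli, $|b_n(t)|\to 1$, and therefore $\bigl|1-|b_n(t)|\bigr|^2\to 0$ pointwise on $\mathbb{R}^d$. For the envelope, any characteristic function satisfies $|b_n(t)|\leq 1$, so $\bigl|1-|b_n(t)|\bigr|^2\leq (1+|b_n(t)|)^2\leq 4$, giving the uniform bound
\[
\bigl|1-|b_n(t)|\bigr|^2\mathcal{F}(\Psi)(t)\;\leq\; 4\,\mathcal{F}(\Psi)(t).
\]

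The remaining step is to verify that $\mathcal{F}(\Psi)$ is integrable on $\mathbb{R}^d$. Under Assumption \ref{assumpsi}, $\Psi$ is a continuous positive definite radial function with $\Psi(0)=1$, so by Bochner's theorem $\mathcal{F}(\Psi)$ is a nonnegative finite measure; the RKHS/Sobolev embedding in Assumption \ref{assumpsi} further guarantees that this measure has a density (the same function $\mathcal{F}(\Psi)$ appearing in the statement), and Fourier inversion at the origin yields $\int_{\mathbb{R}^d}\mathcal{F}(\Psi)(t)\,dt=(2\pi)^{d/2}\Psi(0)=(2\pi)^{d/2}<\infty$. Dominated convergence then gives $a_n\to 0$.

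The one mildly delicate point, and the place I would be most careful, is making sure the Fourier transform convention used by the paper is consistent with the identity $\int\mathcal{F}(\Psi)=(2\pi)^{d/2}$ — but any fixed convention gives a \emph{finite} integral of $\mathcal{F}(\Psi)$, which is all that the dominated convergence argument actually requires. Everything else (L\'evy's theorem, Bochner, the trivial bound $|b_n|\leq 1$) is off-the-shelf.
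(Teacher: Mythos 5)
Your proposal is correct and follows essentially the same route as the paper: pointwise convergence $b_n(t)\to 1$ (from convergence in distribution of $\epsilon_n$ and boundedness of $e^{i\epsilon_n^Tt}$) combined with the dominated convergence theorem, using $|b_n(t)|\leq 1$ and the finiteness of $\int_{\mathbb{R}^d}\mathcal{F}(\Psi)(t)\,dt$ as the dominating bound. You simply spell out the envelope and the Bochner-type integrability of $\mathcal{F}(\Psi)$ more explicitly than the paper does, which is a harmless (and welcome) elaboration rather than a different argument.
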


{\begin{expl}\label{explgauss}
Consider a Gaussian process $f$ with mean zero and covariance function $\sigma^2\Psi$. Suppose the correlation function $\Psi(s-t) = \exp(-\theta\|s-t\|_2^2)$ with $\theta>0$, and the intrinsic noise $\epsilon_j \sim N(0,\sigma_\epsilon^2I_d)$ are i.i.d., where $N(0,\sigma_\epsilon^2I_d)$ is a mean zero normal distribution with covariance matrix $\sigma_\epsilon^2I_d$. By Theorem \ref{Thm:main}, the limit of the MSPE of KALEN and stochastic Kriging is $\sigma^2(\Psi(x-x) - \Psi_S(x-x))$, which can be computed by
\begin{align}\label{egkale}
    \sigma^2(\Psi(x-x) - \Psi_S(x-x)) = & \sigma^2\bigg(1- \iint\Psi(x+\epsilon_1-(x+\epsilon_2))p(\epsilon_1)p(\epsilon_2)d\epsilon_1 d\epsilon_2\bigg)\nonumber\\
    = & \sigma^2 - r_N(x,x) =  \sigma^2 - r_N(x_j,x_j) =\sigma^2\bigg(  \frac{(1+4\sigma^2_\epsilon\theta)^{d/2} - 1}{(1+4\sigma^2_\epsilon\theta)^{d/2}}\bigg),
\end{align}
where $r_N(x_j,x_j)$ is as in \eqref{eq:KernelMnormal} with $x=x_j$.

If there is no noise at an unobserved point $x$, Theorem \ref{Thm:main} states that an asymptotic upper bound of MSPE for stochastic Kriging is
\begin{align*}
\frac{1.04\sigma^2}{(2\pi)^{d/2}}\int_{\mathbb{R}^d} \big|1-|b(t)|\big|^2\mathcal{F}(\Psi)(t)dt. 
\end{align*}
Note that the characteristic function of $N(0,\sigma_\epsilon^2I_d)$ is $b(t) = \mathbb{E}(e^{i\epsilon^Tt}) = e^{-\frac{1}{2}\sigma_\epsilon^2t^Tt}$, and $\mathcal{F}(\Psi)(t) = \theta^{-d/2}e^{-\frac{t^Tt}{4\theta}}$. Thus, the upper bound can be computed by
\begin{align}\label{egkalen}
\frac{1.04\sigma^2}{(2\pi)^{d/2}}\int_{\mathbb{R}^d} \big|1-|b(t)|\big|^2\mathcal{F}(\Psi)(t)dt & = \frac{1.04\sigma^2}{(2\pi\theta)^{d/2}}\int_{\mathbb{R}^d} (1 - e^{-\sigma_\epsilon^2t^Tt/2})^2e^{-\frac{t^Tt}{4\theta}}dt\nonumber\\
& = 
1.04\sigma^2\left(1+\frac{1}{(1+4\sigma_\epsilon^2\theta)^{d/2}}- \frac{2}{(1+2\sigma_\epsilon^2\theta)^{d/2 }}\right).
\end{align}


Figure \ref{figbound} shows the plot of limit \eqref{egkale} and the asymptotic upper bound \eqref{egkalen}
with $\theta =1$ and $\sigma^2 =1$. It can be seen that as the variance of noise increases, both \eqref{egkale} and \eqref{egkalen} increase, and \eqref{egkalen} is larger than \eqref{egkale}. From Panel 1 and Panel 2 of Figure \ref{figbound}, the error is larger if the dimension of the space is larger. This indicates that as in many statistic problems, Gaussian process with input location error is also influenced by the dimension.

\begin{figure}[h!]
    \centering
    \begin{subfigure}
        \centering
        \includegraphics[height=2.5in]{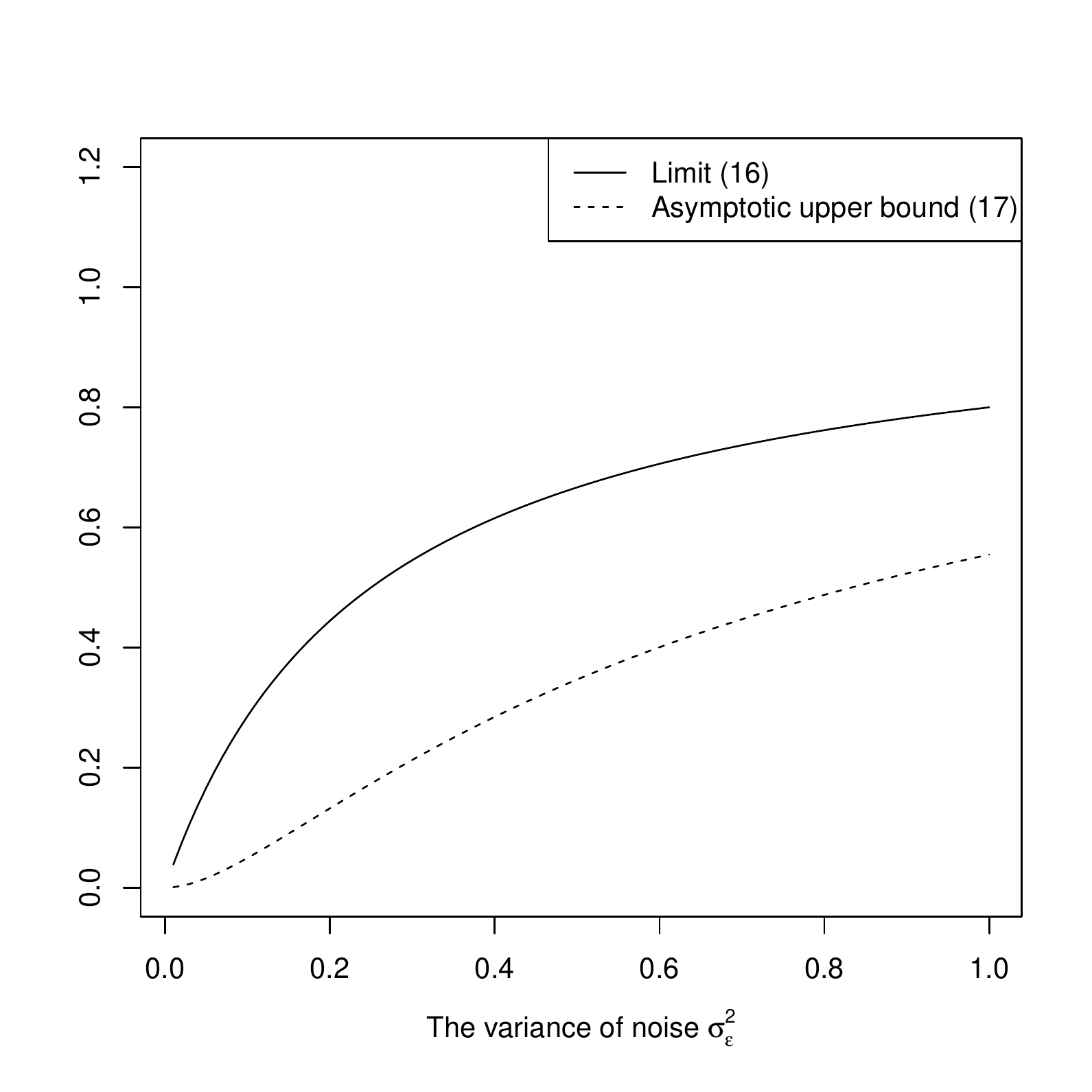}
        \end{subfigure}
    \begin{subfigure}
        \centering\includegraphics[height=2.5in]{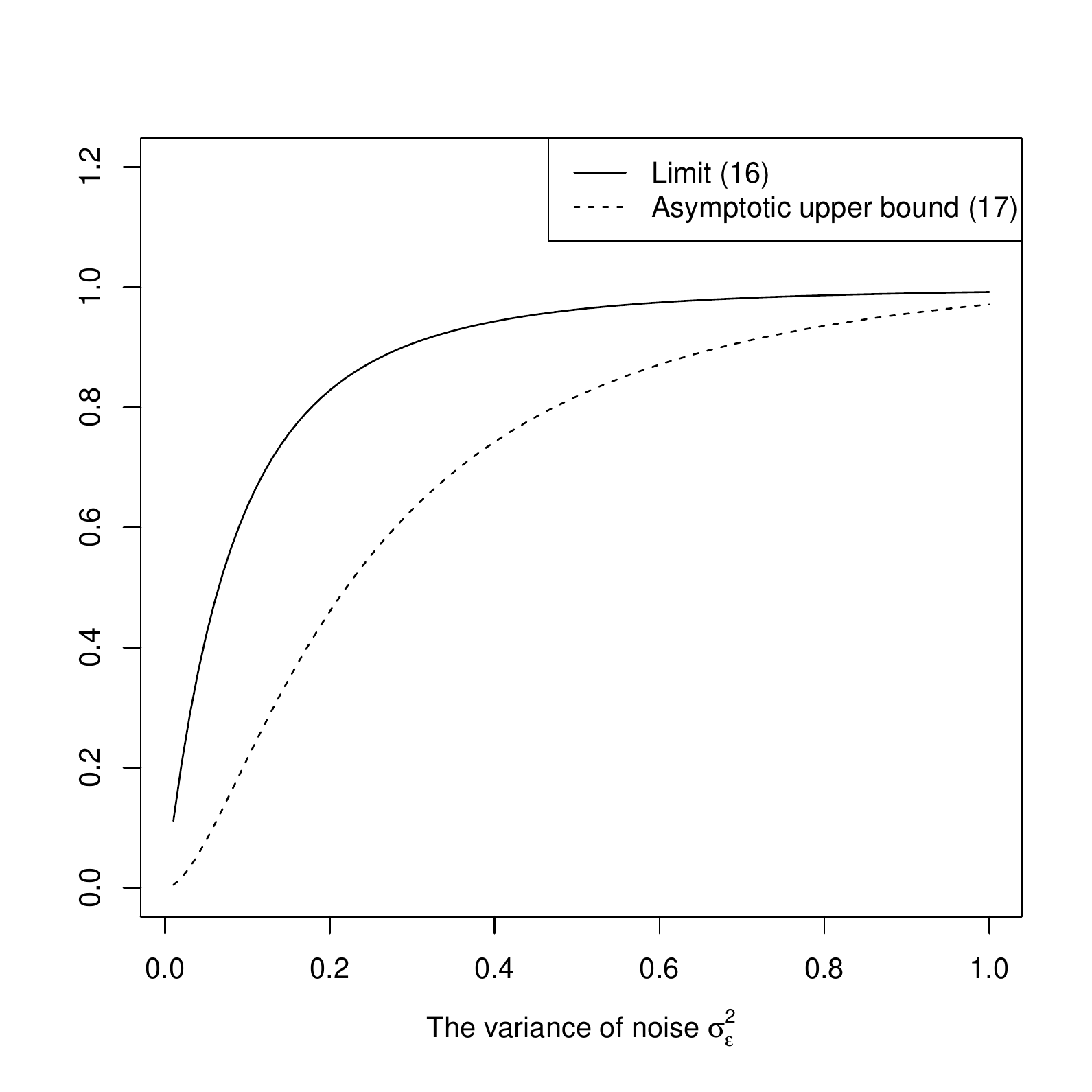}\end{subfigure}
    \caption{The limit \eqref{egkale} and the asymptotic upper bound \eqref{egkalen}
    with $\theta =1$ and $\sigma^2 =1$. \textbf{Panel 1:} $d = 2$. \textbf{Panel 2:} $d = 6$. }\label{figbound} 
\end{figure}
\end{expl}
}

One advantage of stochastic Kriging is that we can simplify the calculation since we do not need to calculate the integrals in \eqref{eq:RvectorNoi}, (\ref{eq:KernelMatrixNoi}), and (\ref{eq:RvectorNoiX}). If the noise is small and the fill distance is small, Theorem \ref{Thm:main} and Proposition \ref{Upprop} state that the MSPE of the stochastic Kriging predictor \eqref{eq:BLUPSK1} can be comparable with the best linear unbiased predictor. 

It is argued in \cite{cervone2015gaussian} that since the integrated covariance function in (\ref{eq:KernelMatrixNoi}) is not the same as the covariance function in the original Gaussian process without location error, a nugget parameter alone cannot capture the effect of location error. {While it is }true that the MSPE of KALE or KALEN is the smallest among all the linear unbiased predictors, our results also show that with {any fixed constant} nugget parameter, the predictor \eqref{eq:BLUPSK1} is as good as KALEN asymptotically, and there is little difference between KALE and the predictor \eqref{eq:BLUPSK1} if the variance of the intrinsic noise and the fill distance are small. {If the sample size $n$ is large, the computational cost of KALE/KALEN and stochastic Kriging will be high, because the computation of a dense matrix inverse is $O(n^3)$. Note that the dense matrix inverse also appears in ordinary Gaussian process modeling. If the sample size is small and the variance of the intrinsic noise is large, as suggested by numeric studies, the difference between the MSPE of KALE or KALEN and stochastic Kriging is large, thus stochastic Kriging with a single nugget parameter may not lead to a good predictor in this case.}

\section{Parameter Estimation}\label{sec:ParaEst}
{Let $\Psi_{\theta^{(1)}}$ be a class of correlation functions and $p_{\theta^{(2)}}(\cdot)$ be a class of probability density functions indexed by $(\theta^{(1)}, \theta^{(2)})\in \Theta$, respectively, where $\Theta$ is a parameter space.} An intuitive approach to estimate the parameters is maximum likelihood estimation. Up to a multiplicative constant, the likelihood function is
\begin{align}\label{eq:likelihoodBig}
\ell(\sigma^2,\theta^{(1)}, \theta^{(2)};X,Y) \propto \int\ldots\int {\rm det}(\Sigma_1)^{-1/2} e^{-\frac{1}{2}Y^T\Sigma_1 ^{-1}Y}
p_{\theta^{(2)}}(\epsilon_1)\ldots p_{\theta^{(2)}}(\epsilon_n)d\epsilon_1\ldots d\epsilon_n,
\end{align}
where $\Sigma_1 = (\sigma^2\Psi_{\theta^{(1)}}(x_j+\epsilon_j-(x_k+\epsilon_k)))_{jk}$, and ${\rm det}(A)$ is the determinant of a matrix $A$. Unfortunately, the integral in (\ref{eq:likelihoodBig}) is difficult to calculate, because the dimension of the integral increases as the sample size increases. In this work, we use a pseudo-likelihood approach proposed by \cite{cressie2003spatial}. Define
\begin{align}\label{eq:likelihoodpseudo}
\ell_g(\sigma^2,\theta^{(1)}, \theta^{(2)};X,Y) = (2\pi)^{-n/2} {\rm det}(K_{(\theta^{(1)},\theta^{(2)})})^{-1/2} \exp\bigg(-\frac{1}{2}Y^T K_{(\theta^{(1)},\theta^{(2)})}^{-1}Y\bigg),
\end{align}
where $\sigma^2,\theta^{(1)}, \theta^{(2)}$ are parameters we want to estimate, and $K_{(\theta^{(1)},\theta^{(2)})}$ is defined in \eqref{eq:KernelMatrixNoi} {{by replacing $\Psi$ and $p(\cdot)$ with $\Psi_{\theta^{(1)}}$ and $p_{\theta^{(2)}}(\cdot)$, respectively.} The maximum pseudo-likelihood estimator can be defined as
\begin{align}\label{pseudomleestimator}
(\hat\sigma_1^2,\hat \theta_1^{(1)}, \hat \theta_1^{(2)}) = \argsup_{(\sigma^2,\theta^{(1)}, \theta^{(2)})} \ell_g(\sigma^2,\theta^{(1)}, \theta^{(2)};X,Y).
\end{align}
{If \eqref{pseudomleestimator} has multiple solutions, we choose any one from them. Because of non-identifiability, parameters inside the Gaussian process $(\sigma^2,\theta^{(1)})$ and parameters inside the probability density function of input variable noise $\theta^{(2)}$ cannot be estimated simultaneously \cite{cervone2015gaussian}. }
The properties of the pseudo-likelihood approach are discussed in \cite{cervone2015gaussian}. Here we list a few of them. First, the pseudo-score provides an unbiased estimation equation, i.e.,
$$\mathbb{E}( S (\sigma^2,\theta^{(1)}, \theta^{(2)};X,Y)) = \mathbb{E}(\nabla\log(\ell_g(\sigma^2,\theta^{(1)}, \theta^{(2)};X,Y)))=0.$$ Second, the covariance matrix of the pseudo-score $\mathbb{E}( S (\sigma^2,\theta^{(1)}, \theta^{(2)};X,Y) S (\sigma^2,\theta^{(1)}, \theta^{(2)};X,Y)^T)$ and the expected negative Hessian of the log pseudo-likelihood $\mathbb{E}\big(\frac{\partial^2}{\partial \theta_j\partial \theta_k}\log(\ell_g(\sigma^2,\theta^{(1)}, \theta^{(2)};X,Y))\big)$ can be calculated, where $\theta_j$ and $\theta_k$ are elements in $(\sigma^2,\theta^{(1)}, \theta^{(2)})$. However, the consistency of parameters estimated by pseudo-likelihood in the case of Gaussian process has not been theoretically justified to the best of our knowledge.

If we use stochastic Kriging, the corresponding (misspecified) log likelihood function is, up to an additive constant, 
\begin{align}\label{eq:likeliwithnug}
\ell_{nug}(\sigma^2,\theta^{(1)}, \mu;X,Y) = -\frac{1}{2}\log ({\rm det}(\Psi_{\theta^{(1)}}(X-X)+\mu I_n)) -\frac{1}{2}Y^T (\Psi_{\theta^{(1)}}(X-X)+\mu I_n)^{-1}Y.
\end{align}
The maximum likelihood estimator of $(\sigma^2,\theta^{(1)}, \mu)$ is defined by
\begin{eqnarray}\label{MLE}
(\hat \sigma_2^2,\hat \theta_2^{(1)}, \hat \mu)=\argsup_{(\sigma^2,\theta^{(1)}, \mu)} \ell_{nug}(\sigma^2,\theta^{(1)}, \mu;X,Y).
\end{eqnarray}

Note that \eqref{eq:likeliwithnug} is the log likelihood function for a Gaussian process with only extrinsic noise. Thus it is misspecified, and the estimated parameters may also be misspecified. However, it has been shown by the well-known works \cite{ying1991asymptotic} and \cite{zhang2004inconsistent} that the Gaussian process model parameters in the covariance functions may not have consistent estimators. Therefore, using Gaussian process models for prediction may be more meaningful than for parameter estimation. In fact, the parameter estimates do not significantly influence our theoretical results on the MSPE of KALE, KALEN and stochastic Kriging, in the sense of the following theorem, {whose proof is presented in Appendix \ref{pfthmparaest}.}
\begin{theorem}\label{thmparaest}
Suppose for some constant $C>0$, $1/C \leq \hat{\mu} \leq C$ holds for all $n$. Let $\hat \Psi_1$ and $\hat \Psi_2$ be the correlation functions with estimated parameters $\hat \theta_1^{(1)}$ and $\hat \theta_2^{(1)}$ as in \eqref{pseudomleestimator} and \eqref{MLE}, respectively. Let $\hat p(\cdot)$ be the probability density function with estimated parameters $\hat \theta_1^{(2)}$. Let $\hat \Psi_S$ be as in \eqref{eq:newKernel} with estimated parameters. Potential dependency of $\hat{\mu}$, $\hat \Psi_1$, $\hat \Psi_2$, $\hat p(\cdot)$, and $\hat \Psi_S$ on $n$ is suppressed for notational simplicity. Assume the following.

(1) There exists a constant $A_1$ such that for all $n$
\begin{align}
    \max\left\{\left\|\frac{\mathcal{F}(\Psi)}{\mathcal{F}(\hat\Psi_S)}\right\|_{L_\infty},\left\|\frac{\mathcal{F}(\Psi)}{\mathcal{F}(\hat\Psi_1)}\right\|_{L_\infty},\left\|\frac{\mathcal{F}(\Psi)}{\mathcal{F}(\hat\Psi_2)}\right\|_{L_\infty}\right\} \leq A_1.
\end{align}

(2) {For some fixed Sobolev space, Assumption \ref{assumpsi} holds for all $\hat \Psi_1$ and $\hat \Psi_2$, and the embedding constants have a uniform upper bound for all $n$.}

(3) Assumption \ref{assumX} holds for the sequence of designs $X$.

(4) All probability density functions $\hat p(\cdot)$ are continuous, have mean zero and second moment. The second moments of all $\hat p(\cdot)$ have a uniform positive lower bound and upper bound for all $n$.

Then the following statements are true.

(i) Suppose there is noise at an unobserved point $x$.  Then the MSPE of KALEN and stochastic Kriging have the limit $\sigma^2(\Psi(x-x) - \Psi_{S}(x-x))$ when the fill distance of $X$ goes to zero, where $\Psi_S$ is defined in \eqref{eq:newKernel}.

(ii) Suppose there is no noise at an unobserved point $x$. An asymptotic upper bound on the MSPE of stochastic Kriging is
\begin{align*}
\frac{1.04\sigma^2}{(2\pi)^{d/2}}\int_{\mathbb{R}^d} |1-|b(t)|^2|\mathcal{F}(\Psi)(t)dt,
\end{align*}
where $b(t) = \mathbb{E}(e^{i\epsilon^Tt})$ is the characteristic function of $h$. Furthermore, if $\hat p(\cdot)=p(\cdot)$ and $\left\|\frac{\mathcal{F}(\hat\Psi_1)}{\mathcal{F}(\Psi)}\right\|_{L_\infty}\leq A_2$, an asymptotic upper bound on the MSPE of KALE is
\begin{align*}
\frac{1.04A_1A_2\sigma^2}{(2\pi)^{d/2}}\int_{\mathbb{R}^d} |1-|b(t)|^2|\mathcal{F}(\Psi)(t)dt.
\end{align*}
\end{theorem}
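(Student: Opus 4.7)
The plan is to show that the proofs of Theorem \ref{propMSPEKALEN} and Theorem \ref{Thm:main} go through with minimal modification under the stated uniformity assumptions. The starting point is the identity, for a generic linear predictor $a(x)^T Y$,
\begin{align*}
\mathbb{E}(y(x) - a(x)^T Y)^2 = \sigma^2 \Psi(0) - 2\, a(x)^T r_N(x) + a(x)^T K\, a(x),
\end{align*}
where $r_N$ and $K$ are built from the \emph{true} correlation function $\Psi$ and the \emph{true} density $p$, while $a(x)$ is determined by the \emph{estimated} parameters: $a(x) = \hat K^{-1}\hat r_N(x)$ for estimated KALEN and $a(x) = (\hat\Psi_2(X-X)+\hat\mu I_n)^{-1}\hat\Psi_2(x-X)$ for estimated stochastic Kriging. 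Completing the square gives
\begin{align*}
\mathbb{E}(y(x) - a(x)^T Y)^2 = \mathbb{E}(y(x) - \hat y_{\text{BLUP}}(x))^2 + (a(x) - K^{-1}r_N(x))^T K\, (a(x) - K^{-1}r_N(x)),
\end{align*}
so a lower bound is already supplied by Theorem \ref{propMSPEKALEN}; the task reduces to showing that the $K$-norm discrepancy between estimated and optimal coefficients vanishes (for part (i)) or can be absorbed into the asymptotic upper bound (for part (ii)).

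For part (i), I would first handle stochastic Kriging. The proof of Theorem \ref{Thm:main}(i) controls $\hat r_N^T \hat K^{-1} \hat r_N$ using (a) a positive lower bound on the nugget, (b) a Sobolev embedding constant for the correlation function, and (c) finite moments of the noise density. Assumptions (1)--(4) furnish each of these uniformly in $n$: the bound $1/C \le \hat\mu \le C$ gives (a); assumption (2) gives (b); assumption (4) gives (c). To pass from $\hat\Psi_S(x-x)$ to $\Psi_S(x-x)$ in the limit, I would combine the uniform $L_\infty$ bound on $\mathcal{F}(\Psi)/\mathcal{F}(\hat\Psi_S)$ in assumption (1) with the continuity-at-zero argument behind Proposition \ref{Upprop}. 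The estimated KALEN case is parallel, replacing $(\hat\Psi_2,\hat\mu)$ by $(\hat\Psi_1, \hat\Psi_1(0)-\hat\Psi_{S,1}(0))$, which is again bounded away from zero and infinity under assumptions (2) and (4).

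For part (ii), the estimated-parameter analogue of Theorem \ref{Thm:main}(ii) first produces the bound
\begin{align*}
\frac{1.04\sigma^2}{(2\pi)^{d/2}}\int_{\mathbb{R}^d} \bigl|1-|b(t)|\bigr|^2 \mathcal{F}(\hat\Psi_2)(t)\, dt,
\end{align*}
by running the original proof with $\hat\Psi_2$ in place of $\Psi$ and $\hat\mu$ in place of $\mu$. Substituting $\mathcal{F}(\Psi) = (\mathcal{F}(\Psi)/\mathcal{F}(\hat\Psi_2)) \cdot \mathcal{F}(\hat\Psi_2)$ and applying the $L_\infty$ bound from assumption (1) yields the stated upper bound with the factor $A_1$ absorbed into the constant. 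For KALE under $\hat p = p$, the coefficient vector is $\hat K^{-1}\hat r$; the same machinery applies, but an extra comparison between $\mathcal{F}(\hat\Psi_1)$ and $\mathcal{F}(\Psi)$ using $A_2$ is needed, giving the $A_1 A_2$ factor.

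The main obstacle will be the cross term $\hat r_N(x)^T \hat K^{-1} r_N(x)$ and the quadratic term $\hat r_N(x)^T \hat K^{-1} K \hat K^{-1} \hat r_N(x)$, which mix hatted and unhatted quantities. The standard RKHS calculation handles the all-hat version $\hat r_N^T \hat K^{-1} \hat r_N$ cleanly, but bounding the difference between these and the all-hat versions requires expanding $r_N - \hat r_N$ and $K - \hat K$ in the reproducing kernel Hilbert space associated with $\hat\Psi_S$. Assumption (1) is exactly what allows these differences to be controlled via $L_\infty$ bounds on Fourier-transform ratios, and assumption (4) prevents the noise densities from degenerating in a way that would spoil the convergence-in-distribution step inherited from Proposition \ref{Upprop}.
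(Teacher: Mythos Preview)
Your proposal has a genuine directional error in how assumption~(1) is deployed, and this breaks the argument for both parts.

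The MSPE $\mathbb{E}(y(x)-a^T Y)^2$ is computed under the \emph{true} model, so its Fourier representation is
\[
\frac{\sigma^2}{(2\pi)^{d/2}}\int_{\mathbb{R}^d}\Bigl|\sum_j u_j e^{i\langle x_j,t\rangle}b(t)-e^{i\langle x,t\rangle}\Bigr|^2\mathcal{F}(\Psi)(t)\,dt + \sigma^2 a\|u\|_2^2,
\]
with the true $\mathcal{F}(\Psi)$, regardless of what estimated parameters sit inside $u$. You therefore cannot ``run the original proof with $\hat\Psi_2$ in place of $\Psi$'': that would correspond to computing the MSPE under the wrong model. Moreover, your subsequent conversion goes the wrong way. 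Assumption~(1) says $\mathcal{F}(\Psi)\le A_1\,\mathcal{F}(\hat\Psi_2)$, so from an upper bound containing $\int\cdots\mathcal{F}(\hat\Psi_2)\,dt$ you cannot deduce the smaller bound $\int\cdots\mathcal{F}(\Psi)\,dt$; you would need $\|\mathcal{F}(\hat\Psi_2)/\mathcal{F}(\Psi)\|_{L_\infty}$, which is not assumed for stochastic Kriging. This is why the stochastic Kriging bound in Statement~(ii) carries no $A_1$ factor: the $|1-|b(t)||^2\mathcal{F}(\Psi)$ term must be split off \emph{before} any conversion to $\hat\Psi_2$.

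The paper's route is the reverse of yours. Work directly with the true-model quadratic form in $u$; in part~(i) bound $c(t)\le 1$, in part~(ii) split off the $|1-|b(t)||^2\mathcal{F}(\Psi)$ term first. Only then invoke assumption~(1) to replace $\mathcal{F}(\Psi)$ by $A_1\,\mathcal{F}(\hat\Psi_2)$ in the \emph{remaining} integral, which converts it to a $\hat\Psi_2$-quadratic form plus $\hat\mu\|u\|_2^2$. This quadratic form is minimised exactly by the estimated predictor's coefficient vector $u=(\hat\Psi_2(X-X)+\hat\mu I)^{-1}\hat\Psi_2(X-x)$, and a uniform version of Lemma~\ref{lemma2skzero} (using condition~(2) and $\hat\mu\le C$) shows it tends to zero. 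Your completing-the-square decomposition and the plan to control $r_N-\hat r_N$, $K-\hat K$ in the $\hat\Psi_S$-RKHS are unnecessary detours; the direct bound avoids any comparison between hatted and unhatted covariance vectors.
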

Theorem \ref{thmparaest} states that if the pseudo-likelihood $ \ell_g$ and the misspecified log likelihood $\ell_{nug}$ can provide reasonable estimated parameters, then we have the following: (1) If an unobserved point has noise, the limit of the MSPE of KALEN and stochastic Kriging remains the same; and (2) If an unobserved point has no noise, the upper bounds on the MSPE of KALE and stochastic Kriging can be obtained. The limit and upper bounds are small if the noise is small. The upper bound for the MSPE of stochastic kriging is the same as the bound in Theorem \ref{Thm:main}. However, the upper bound for the MSPE of KALE is inflated by $A_1A_2$. We believe this inflation is not necessary and can be improved. In sum, the parameter estimation does not significantly influence our theoretical analysis.

The computation complexity of \eqref{MLE} is about the same as that of \eqref{pseudomleestimator}, if \eqref{eq:KernelMatrixNoi} can be calculated analytically. Unfortunately, \eqref{eq:KernelMatrixNoi} usually does not have a closed form, which substantially increases the computation time of solving \eqref{pseudomleestimator}.

\section{Numeric Results}\label{secNumerical}
In this section, we report some simulation studies to investigate the numeric performance of KALE, KALEN and stochastic Kriging. In Example 1, we use Gaussian correlation functions to fit a 1-d function, where the predictor \eqref{BLUPNoN} has analytic form. In Example 2, we use Mat\'ern correlation functions to fit a 2-d function, where the integrals in \eqref{eq:RvectorNoi} and \eqref{eq:KernelMatrixNoi} need to be calculated by Monte-Carlo sampling \cite{cressie2003spatial}.

\subsection{Example 1}\label{Eg1subsec}
Suppose the underlying function is $f(x)=\sin(2\pi x/10)+0.2\sin(2\pi x/2.5)$, $x\in [0,8]$ \cite{higdon2002space}. The design points are selected to be 161 evenly spaced points on $[0, 8]$. The intrinsic noise is chosen to be mean zero normally distributed with the variances $0.05k$, for $k=1,2,3,4$. We use a Gaussian correlation function $\Psi(s-t) = \sigma^2\exp(-\theta\|s - t\|_2^2)$ to make predictions, and use the pseudo-likelihood approach presented in Section \ref{sec:ParaEst} to estimate the unknown parameters $\sigma^2, \theta$ and the variance of noise $\sigma_\epsilon^2$. For each variance of intrinsic noise, we approximate the squared $L_2$ error $\|f - \hat f\|_2^2$ by $\frac{8}{n}\sum_{i=1}^n(f(x_i) - \hat f(x_i))^2$, where the $x_i$'s are 8001 evenly spaced points on $[0,8]$. Then we run 100 simulations and take the average of $\frac{8}{n}\sum_{i=1}^n(f(x_i) - \hat f(x_i))^2$ to estimate $\mathbb{E}\|f - \hat f\|_2^2$. We estimate $\mathbb{E}\|y - \hat y\|_2^2$ by a similar approach, {i.e., estimate $\mathbb{E}\|y - \hat y\|_2^2$ by the average of $\frac{8}{n}\sum_{i=1}^n(y(x_i) - \hat y(x_i))^2$ of 100 simulations, where $y(x_i) = f(x_i + \epsilon_i)$ and $\epsilon_i$'s are intrinsic noise.} 
Recall that $\mathbb{E}\|f - \hat f\|_2^2$ and $\mathbb{E}\|y - \hat y\|_2^2$ are related to KALE and KALEN, respectively. With an abuse of terminology, we still call $\mathbb{E}\|f - \hat f\|_2^2$ and $\mathbb{E}\|y - \hat y\|_2^2$ MSPE.


The RMSPEs, which are the square roots of MSPEs, for KALE/KALEN and stochastic Kriging, are shown in Table \ref{tab:eg1dcasenonoise}/Table \ref{tab:eg1dcasenoise}, respectively.

\begin{table}[h]
\centering
\begin{tabular}{|c|c|c|c|}
\hline  
$\sigma_\epsilon^2$ & RMSPE of KALE & RMSPE of stochastic   & Difference \\
           &              &  Kriging &  \\
\hline
0.05 & 0.1147 & 0.1209 & 0.0062 \\
0.10 & 0.1528 & 0.1764 & 0.0236 \\
0.15 & 0.1917 & 0.2364 & 0.0448 \\
0.20 & 0.2380 & 0.3149 & 0.0769\\
\hline
\end{tabular}
\caption{Comparison of the RMSPE for KALE and stochatic Kriging: 1-d function with Gaussian correlation function. In fourth column, difference $=$ 3rd column $-$ 2nd column, i.e., the RMSPE of stochastic Kriging $-$ the RMSPE of KALE.}
\label{tab:eg1dcasenonoise}
\end{table}

\begin{table}[h]
\centering
\begin{tabular}{|c|c|c|c|c|}
\hline  
$\sigma_\epsilon^2$  & RMSPE of KALEN  & RMSPE of stochastic & Difference\\
           &                &  Kriging & \\
\hline
0.05 & 0.3627 & 0.3619 &  $-0.0014$ \\
0.10 & 0.4940 & 0.4931 &  $-0.0009$ \\
0.15 & 0.5884 & 0.5885 &  $0.0001$ \\
0.20 & 0.6651 & 0.6704 &  $0.0053$ \\
\hline
\end{tabular}
\caption{Comparison of the RMSPE for KALEN and stochatic Kriging: 1-d function with Gaussian correlation function. In fourth column, difference $=$ 3rd column $-$ 2nd column, i.e., the RMSPE of stochastic Kriging $-$ the RMSPE of KALEN.}
\label{tab:eg1dcasenoise}
\end{table}

It can be seen from Tables \ref{tab:eg1dcasenonoise} and \ref{tab:eg1dcasenoise} that the RMSPE of KALE/KALEN and stochastic Kriging decreases as the variance of the intrinsic noise decreases. This corroborates the results in Theorem \ref{Thm:main} and Proposition \ref{Upprop}. The difference of RMSPE between KALE/KALEN and stochastic Kriging also decreases when the variance of the intrinsic noise decreases. Comparing Table \ref{tab:eg1dcasenoise}  with Table \ref{tab:eg1dcasenonoise}, it can be seen that the RMSPE of KALEN is larger than that of KALE. This is reasonable because KALEN predicts $y(x)$, which includes an error term while $f(x)$ does not. The computation of KALE/KALEN has the same complexity as the stochastic Kriging in this example, because a Gaussian correlation function is used, and the integrals in (\ref{eq:KernelMatrixNoi}) and (\ref{eq:RvectorNoiX}) can be calculated analytically. 

In order to further understand the performance of KALE/KALEN and stochastic Kriging, {two realizations among the 100 simulations for Table \ref{tab:eg1dcasenonoise} and Table \ref{tab:eg1dcasenoise} are illustrated in Panel 1 and Panel 2 of Figure \ref{1dCaseFig}, respectively,} where the variance of the intrinsic noise is chosen to be 0.05. In Panel 1 of Figure \ref{1dCaseFig}, the circles are the collected data points. The true function, the prediction curves of KALE and stochastic Kriging are denoted by solid line, dashed line and dotted line, respectively. It can be seen from the figure that both KALE and stochastic Kriging approximate the true function well. {In Panel 2 of Figure \ref{1dCaseFig}, the dots are the samples of $y(x)$ on 8001 testing points. It can be seen that the samples are around the predictions of KALEN and stochastic Kriging, but with much more fluctuations. This shows that the RMSPE in Table \ref{tab:eg1dcasenoise} is larger than those in Table \ref{tab:eg1dcasenonoise}.}

\begin{figure}[h!]
    \begin{subfigure}
        \centering\includegraphics[height=3in]{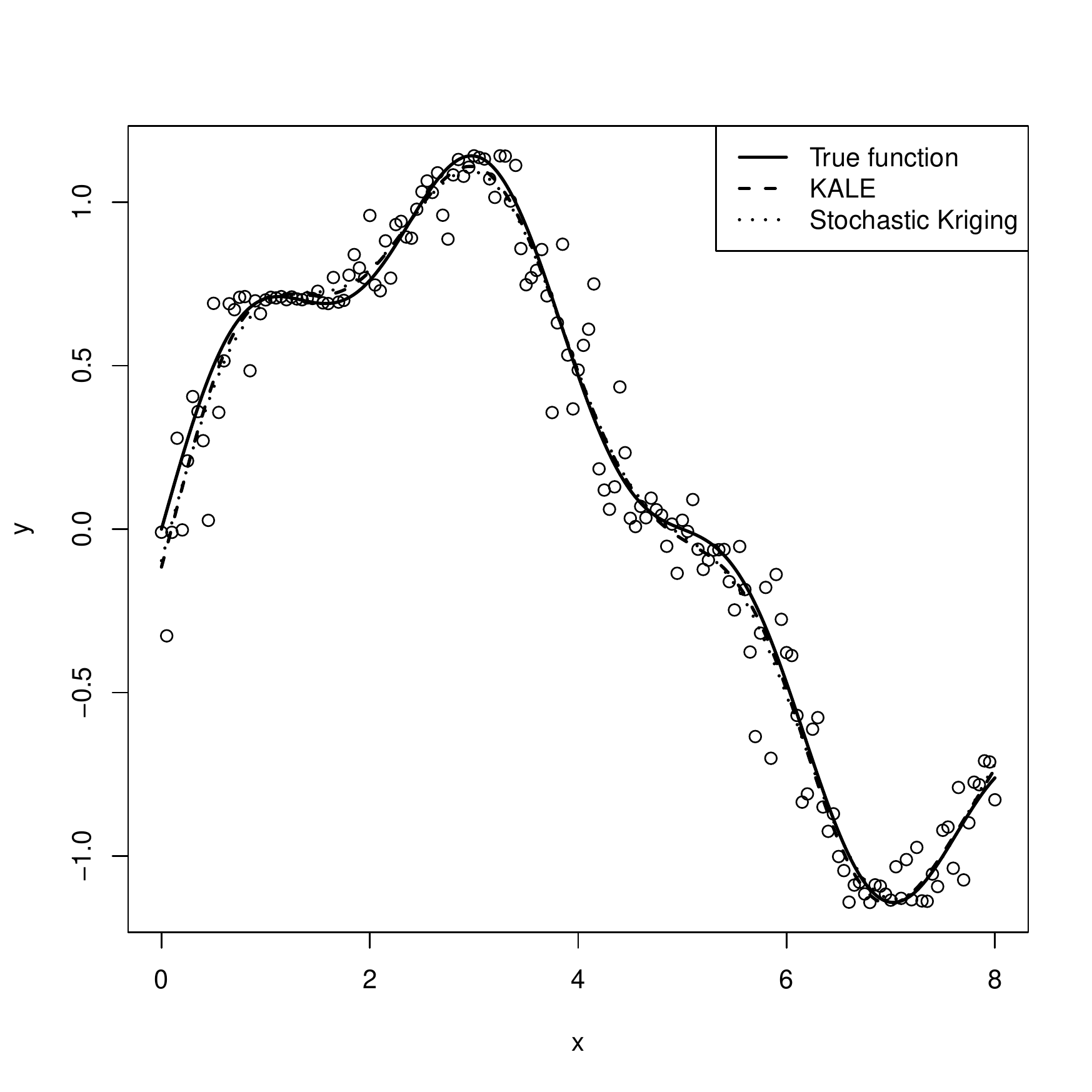}\end{subfigure}
   \begin{subfigure}
        \centering\includegraphics[height=3in]{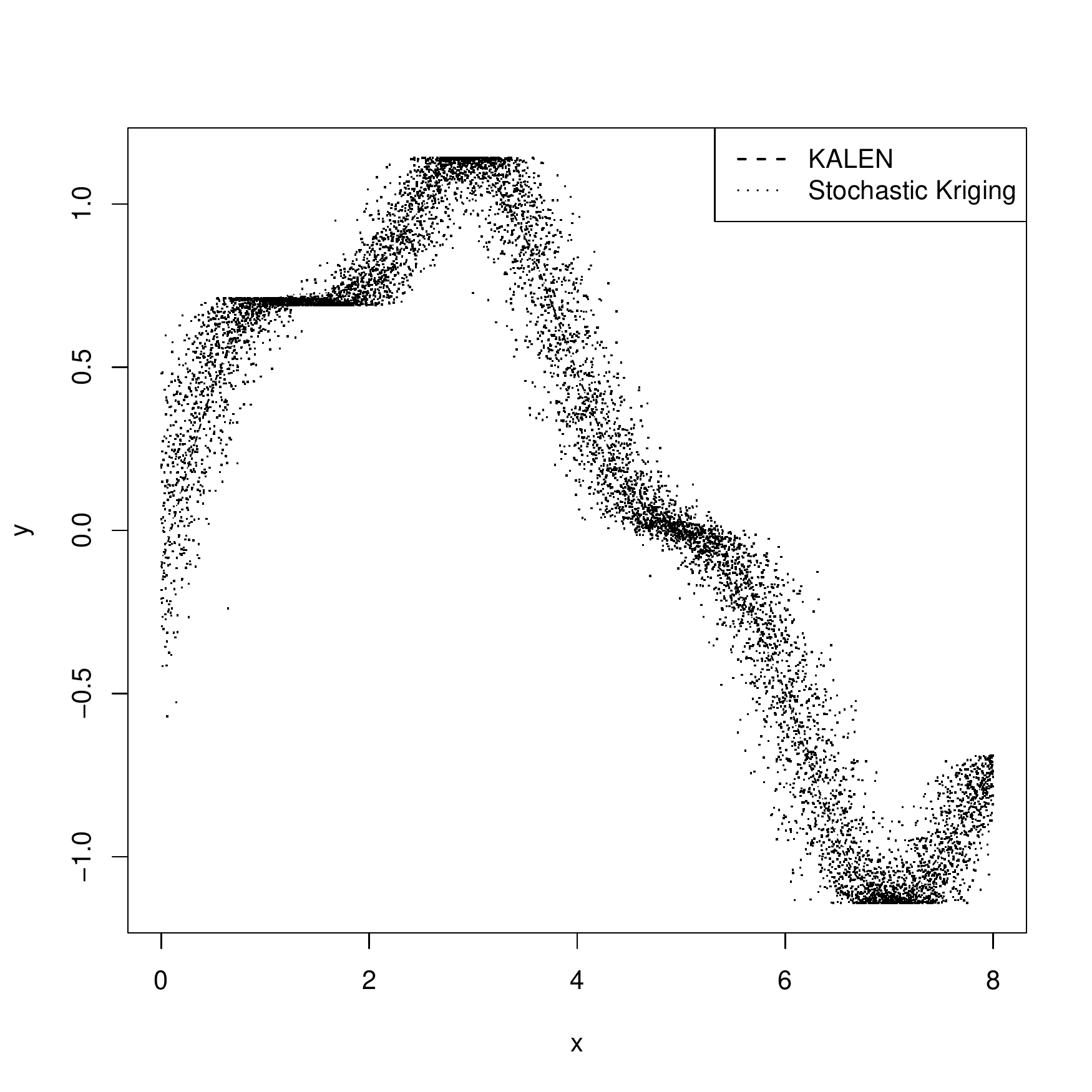}\end{subfigure}
    \caption{\textbf{Panel 1:} An illustration of KALE and stochastic Kriging. The true function, the prediction curves of KALE and stochastic Kriging are denoted by solid line, dashed line and dotted line, respectively. The circles are the observed data points. \textbf{Panel 2:} An illustration of KALEN and stochastic Kriging. The dots are the samples of $y(x)$ on testing points.The prediction curves of KALEN and stochastic Kriging are denoted by dashed line and dotted line. }\label{1dCaseFig}
\end{figure}

{
We also include the confidence interval results in this subsection. It is known \cite{cervone2015gaussian} that there is no nontrivial structure for $\epsilon$ (that is, $\epsilon \not\equiv 0$) such that $y$ is a Gaussian process on $\Omega$. Since there is no closed form for the distribution of $\hat f(x)$ (or $\hat y(x)$), we use Gaussian approximation. Specifically, we treat $\hat f(x)$ (or $\hat y(x)$) as normally distributed and compute the pointwise conditional variance $\hat \sigma_f(x)^2$ (or $\hat \sigma_y(x)^2$). Then we compute the pointwise confidence interval of Gaussian process, defined by $[\hat f(x) - q_\beta \hat \sigma_f(x), \hat f(x) + q_\beta \hat \sigma_f(x)]$ (or $[\hat y(x) - q_\beta \hat \sigma_y(x), \hat y(x) + q_\beta \hat \sigma_y(x)]$)  with confidence level $(1 - \beta)100\%$, where $q_\beta$ denote the $(1 - \beta/2)$th quantile of standard normal distribution. We select $\beta = 0.05$ and use coverage rate to quantify the quality of the confidence interval, where the coverage rate is the proportion of the time that the interval contains the true value. However, the length of confidence interval of stochastic Kriging for Gaussian process with only extrinsic error converges to zero, which does not reflect the fact that the actual MSPE of stochastic Kriging does not converge to zero. Because of this, we adjust the estimated conditional variance of the stochastic Kriging by adding the limit value $\sigma^2(\Psi(x-x) - \Psi_S(x-x))$. The results are reported in Table \ref{tab:crci}.



\begin{table}[h]
\centering
\begin{tabular}{|c|c|c|c|c|c|c|}
\hline  
$\sigma_\epsilon^2$  & KALE  & SK$_1$ & Adjusted SK$_1$ & KALEN  & SK$_2$ & Adjusted SK$_2$\\
\hline
0.05 & 0.9179 & 0.8547 & 0.9630 & 0.9292 & 0.4903 & 0.6328\\
0.10 & 0.9268 & 0.7906 & 0.9754 & 0.9296 & 0.4432 & 0.6490\\
0.15 & 0.9202 & 0.6987 & 0.9670 & 0.9345 & 0.4033 & 0.6677\\
0.20 & 0.9163 & 0.5834 & 0.9213 & 0.9358 & 0.3494 & 0.6545\\
\hline
\end{tabular}
\caption{{Coverage rate of pointwise confidence interval of KALE and stochastic Kriging (when there is no noise on unobserved point), and KALEN and stochastic Kriging (when there is noise on unobserved point). The following notation is used: (Adjusted) SK$_1$ = (Adjusted) Stochastic Kriging without noise at the unobserved point; (Adjusted) SK$_2$ = (Adjusted) Stochastic Kriging with noise at the unobserved point. The nominal level is selected to be 95\%.}}
\label{tab:crci}
\end{table}

From Table \ref{tab:crci}, it can be seen that the (misspecified) pointwise confidence interval does not achieve the nominal level. It is expected that the stochastic Kriging has poor coverage because the model is misspecified. KALE and KALEN, on the other hand, can provide more reliable confidence interval. In fact, even for Guassian process without error, it is often observed that Gaussian process models have poor coverage of their confidence intervals \cite{gramacy2012cases,joseph2011regression,yamamoto2000alternative}. Therefore, a better uncertainty quantification methodology for Gaussian process with input location error is needed.
}

\subsection{Example 2}
In this example, we compare the calculation time of stochastic Kriging and KALE, where the predictor \eqref{BLUPNoN} of KALE does not have an analytic form. Suppose the underlying function is $f(x) = [(30+5x_1\sin(5x_1))(4+\exp(-5x_2))-100]/6$ for $x_1,x_2\in [0,1]$ \cite{lim2002design}. We use Mat\'ern correlation functions \cite{stein1999interpolation}
\begin{eqnarray}\label{matern}
	\Psi_M(x;\nu,\phi)=\frac{1}{\Gamma(\nu)2^{\nu-1}}(2\sqrt{\nu}\phi \|x\|_2)^\nu K_\nu(2\sqrt{\nu}\phi\|x\|_2)
\end{eqnarray}
to make predictions, where $K_\nu$ is the modified Bessel function of the second kind, and $\nu$ and $\phi$ are model parameters. The Mat\'ern correlation function can control the smoothness of the predictor by $\nu$ and thus is more robust than a Gaussian correlation function \cite{wang2019prediction}. The covariance function is chosen to be $\Psi(x-y) = \Psi_M(x-y;\nu,\phi)$. {The intrinsic noise is chosen to be mean zero normally distributed with the variances $0.01k$, for $k=2,3,4,5$.} We use maximin Latin hypercube design with 20 points to estimate parameters, and choose the first 100 points in the Halton sequence \cite{halton1964algorithm} as testing points. The smoothness parameter $\nu$ is chosen to be $3$, which can provide a robust estimator of $f$. {In order to improve the prediction performance, we use ordinary Kriging, where the mean in Gaussian process model is assumed to be an unknown constant instead of zero, i.e., $f$ is a realization of Gaussian process with unknown mean $\beta$ and covariance function $\sigma^2\Psi_M$.}

If we use a Mat\'ern correlation function, the integrals in \eqref{eq:RvectorNoi} and \eqref{eq:KernelMatrixNoi} do not have analytic forms and are calculated by Monte-Carlo sampling. We randomly choose 30 points to approximate the integral in \eqref{eq:RvectorNoi}, and 900 points to approximate the integral in \eqref{eq:KernelMatrixNoi}. Preliminary results show that, if we use  Monte-Carlo sampling with different points every time in the evaluation of the integrals in (\ref{eq:RvectorNoi}) and (\ref{eq:KernelMatrixNoi}), it is not possible to use maximum pseudo-likelihood estimation to estimate the unknown parameters, consisting of $\phi$ in \eqref{matern}, $\sigma^2$, the variance of noise $\sigma_\epsilon^2$ and the mean $\beta$. The reason is that at each step of the optimization in maximum pseudo-likelihood estimation, we need to calculate the integral, whose computational cost is high. Therefore, we generate 900 points and 30 points randomly at one time and use these 900 points and 30 points for evaluations of (\ref{eq:KernelMatrixNoi}) and (\ref{eq:RvectorNoi}), respectively. Then we use maximum pseudo-likelihood estimation to estimate the unknown parameters. {We run 20 simulations and compute the average processing time and the approximated MSPE $\frac{1}{100}\sum_{i=1}^{100}(f(x_i) - \hat f(x_i))^2$, where $\hat f$ is the KALE predictor, and $x_i$'s are testing points. }

For stochastic Kriging, we use (misspecified) maximum likelihood estimation to estimate the unknown parameters, which are $\phi$ in \eqref{matern}, $\sigma^2$, the nugget parameter $\mu$ and the mean $\beta$. {We run 100 simulations and compute the average processing time and the approximated MSPE $\frac{1}{100}\sum_{i=1}^{100}(f(x_i) - \hat f(x_i))^2$, where $\hat f$ is the stochastic Kriging predictor, and $x_i$'s are testing points.} The RMSPE, {which is the square root of MSPE,} and the processing time of KALE and stochastic Kriging are shown in Table \ref{tab:eg2dcasematern}.



\begin{table}[h]
\centering
\begin{tabular}{|c|c|c|c|c|c|}
\hline  
$\sigma_\epsilon^2$  & RMSPE & PT & RMSPE of  & PT &Difference \\
           &    of KALE         & of KALE & SK &   of  SK &\\
\hline
0.02 & 1.5292 & 648.86 & 1.9852 & 0.6261   & 0.4559 \\
0.03 & 1.7899 & 633.55  & 2.2346 & 0.5947  & 0.4446\\
0.04 & 1.9734 & 695.27  &  2.5226 & 0.5848   & 0.5492\\
0.05 & 2.4501 & 748.33 & 3.3415 & 0.5803  & 0.8915\\
\hline
\end{tabular}
\caption{The RMSPE of KALE and stochatic Kriging: 2-d function with Mat\'ern correlation function. The processing time is in seconds. In sixth column, difference $=$ 4th column $-$ 2nd column, i.e., the RMSPE of stochastic Kriging $-$ the RMSPE of KALE. The following abbreviation is used: PT = Processing time, SK = stochastic Kriging.}
\label{tab:eg2dcasematern}
\end{table}










It can be seen that KALE has some improvement on prediction accuracy over stochastic Kriging. However, KALE takes too much computation time, even though the numbers of design points and testing points are relatively small. The comparison would get worse as the number of points became larger. Therefore, if the integrals in \eqref{eq:RvectorNoi} and \eqref{eq:KernelMatrixNoi} do not have analytic forms, stochastic Kriging is preferred, especially when the sample size is large and the variance of intrinsic noise is small.



\section{Case Study: Application to Composite Parts Assembly Process}\label{sec:casestudy}

To illustrate the performance of KALEN and stochastic Kriging, we apply them to a real case study, the composite parts assembly process. As shown in Figure \ref{P1caseFig} (a) and Figure \ref{P1caseFig}  (b), ten adjustable actuators are installed at the edge of a composite part \cite{wen2018FEA,yue2018surrogate}. These actuators can provide push or pull forces in order to adjust the shape of the composite part to the target dimensions. The dimensional shape adjustment of composite parts is one of the most important steps in the aircraft assembly process. 
It reduces the gap between the composite parts and decreases the assembly time with improved dimensional quality. Detailed descriptions about the shape adjustment of composite parts can be found in \cite{wen2018FEA}. Modeling of composite parts is the key for shape adjustment. The objective is to build a model that has the capability to predict the dimensional deviations accurately under specific actuators’ forces.  In this model, the input variables are ten actuators' forces. The responses are the dimensional deviations of multiple critical points along the edge plane near the actuators, shown in Figure \ref{P1caseFig}  (c). We consider responses at 91 critical points around the composite edge in the case study.  
\begin{figure}[h!]
    \centering
    \includegraphics[height=2in]{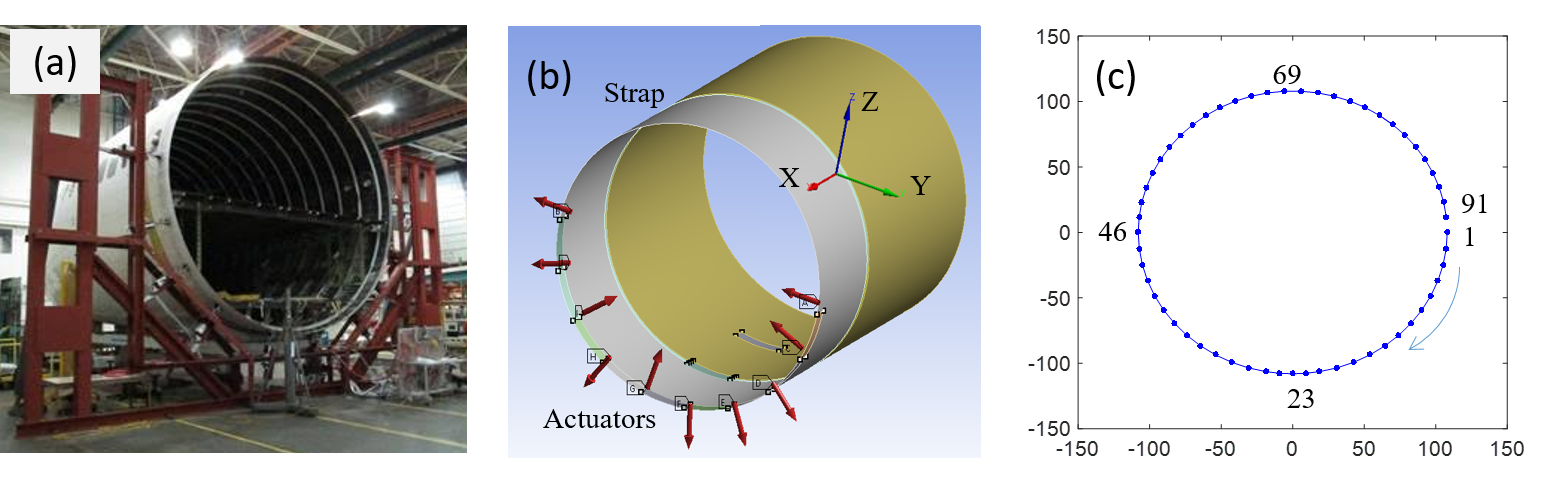}
    \caption{Schematic diagram for composite part shape adjustment: (a) composite part shape adjustment \cite{wen2018FEA}, (b) layout of ten actuators, (c) multiple critical points.}\label{P1caseFig}
\end{figure}

In the shape control of composite parts, intrinsic noise commonly exists in the actuators’ forces \cite{yue2018surrogate}. When a force is implemented by an actuator, the actual force may not be exactly the same as the target force. The magnitudes of forces may have uncertainties naturally due to the device tolerances of the hydraulic or electromechanical system of actuators. Uncertainties in the directions and application points of forces come from the deviations of contact geometry of actuators and their installations. For the modeling of composite parts, there are two steps: (i) training the parameters using experimental data; (ii) predicting dimensional deviations for new actuators' forces. In the training step, we need to consider input error in the experimental data. Additionally, when new actuator forces are implemented in practice, the uncertainty in the actual delivered forces inevitably exists. This suggests that KALEN is suitable for this application scenario. 
We will show the performance of KALEN and compare it with stochastic Kriging as follows. 

The model we use in this case study is  $Y^{(j)} = F^T\beta^{(j)} + Z^{(j)}(F)$ for $j=1,\ldots,91$, where $Y^{(j)}$ is the dimensional deviation vector of the composite part at the critical point $j$, {$F=(F_{(1)},\ldots,F_{(10)})^T\in \RR^{10}$ is the vector of actuators' forces, and $Z^{(j)}(\cdot)$ is a mean zero Gaussian process, with input variables in $\mathbb{R}^{10}$. The covariance of $Z^{(j)}(F_1)$ and $Z^{(j)}(F_2)$ for any forces $F_1=(F_{1,(1)},\ldots,F_{1,(10)})^T$ and $F_2=(F_{2,(1)},\ldots,F_{2,(10)})^T$ is assumed to be $\sigma_j^2\exp(-\sum_{k=1}^{10}\theta_{jk}(F_{1,(k)} - F_{2,(k)})^2)$, where $\sigma_j,\theta_{jk} > 0$ are parameters. We assume the intrinsic noise $\epsilon\sim N(0,\sigma_\epsilon^2I_{10})$, where $N(0,\sigma_\epsilon^2I_{10})$ is a mean zero normal distribution with covariance matrix $\sigma_\epsilon^2I_{10}$.} The parameters $\beta^{(j)}$, $\theta_{jk}$, $\sigma_\epsilon^2$, and $\sigma_j^2$ are estimated by maximum (pseudo-)likelihood estimation as described in Section \ref{sec:ParaEst}. The mean function $F^T\beta^{(j)}$ we use in this model is to represent the linear component in dimensional shape control of composite fuselage, which follows the approach in \cite{yue2018surrogate}. Specifically, according to the mechanics of composite material and classical lamination theory, there is a linear relationship between dimensional deviations and actuators' forces within the elastic zone. The term $F^T\beta^{(j)}$ describes how the actuators' forces impact the part deviations linearly, and $Z^{(j)}(\cdot)$ represents the nonlinear components so as to obtain accurate predictions.

For the computer experiments, we generated 50 training samples and 30 testing samples based on a maximin Latin hypercube design. The designed experiments are conducted in the finite element simulation platform developed by \cite{wen2018FEA}. It is worth mentioning that the computer simulation here is not a deterministic simulation {because we add the intrinsic noise at the input points in simulation to simulate the randomness in the real process. Therefore, repeated runs with the same input points will have different outputs.} The intrinsic noise is added to the actuators' forces to mimic real actuators. The standard deviations (SD) of actuators' forces are chosen to be 0.005, 0.01, 0.02, 0.03, and 0.04 lbf (lbf is a unit of pound-force), which is determined by the tolerance of different kinds of actuators according to engineering domain knowledge. The maximum actuators' force is set to 600 lbf. After we have the computer experiment data, we can estimate the parameters of KALEN by solving the pseudo-likelihood equation \eqref{pseudomleestimator}, and the parameters of stochastic Kriging by solving the maximum likelihood equation \eqref{MLE}. 
Then, we can use the model to predict dimensional deviations at the unobserved points in the testing dataset.  

The performance of KALEN and stochastic Kriging are compared in terms of mean absolute error (MAE). This is an index that has been commonly used in the composite parts assembly domain to evaluate the modeling performance. We also compare RMSPE of KALEN and stochastic Kriging, {and the processing time of generating each output. The RMSPE is the square root of MSPE, which is approximated by the average of $\frac{1}{30}\sum_{i=1}^{30}(Y^{(j)}(F_i) - \hat Y^{(j)}(F_i))^2$ on the 91 points, where $F_i$'s are the inputs of testing samples, $Y^{(j)}(F_i)$ is the observed testing data, and $\hat Y^{(j)}(F_i)$ is the KALEN predictor. The MAE is approximated by $\frac{1}{30}\sum_{i=1}^{30}|Y^{(j)}(F_i) - \hat Y^{(j)}(F_i)|$ on the 91 points.} 


\begin{table}[h]
\centering
\begin{tabular}{|c|c|c|c|c|c|}
\hline  
SD of & MAE (RMSPE)   & MAE (RMSPE)    &  Difference   & PT of &PT of \\
    AF &      of KALEN          & of SK &    & KALEN & SK\\
\hline
0.005 & 0.0059 (0.0081) & 0.0059 (0.0081) & $7.1 \times 10^{-7}$ ($1.9 \times 10^{-6}$)  & 0.1500   &   0.3415 \\
0.01 & 0.0117 (0.0147) & 0.0119 (0.0151)  & $1.7 \times 10^{-4}$ ($3.7\times 10^{-4}$)   & 0.4691 &	0.3938\\ 
0.02 & 0.0216 (0.0265) & 0.0217 (0.0264)  & $9.5 \times 10^{-5}$ ($-8.7\times 10^{-5}$)   & 0.5048  & 0.3964\\ 
0.03 & 0.0286 (0.0335) & 0.0304 (0.0376)  & $1.7\times 10^{-3}$ ($4.1\times 10^{-3}$)   & 0.6746 & 0.4115\\
0.04 & 0.0389 (0.0478) & 0.0486 (0.0610)  & $9.7\times 10^{-3}$ ($1.3\times 10^{-2}$)  &  0.6529 & 0.4302\\
\hline
\end{tabular}
\caption{The MAE (RMSPE) of KALEN and stochastic Kriging in the composite part modeling. In 4th column, difference $=$ 3rd column $-$ 2nd column. The processing time is in seconds. The following abbreviation is used:  AF = actuators' forces, PT = Processing time for each output, SK = stochastic Kriging.}
\label{tab:varchangeBoeing}
\end{table}


The MAE and RMSPE of KALEN and stochastic Kriging are summarized in Table \ref{tab:varchangeBoeing}. As the  SD of actuators' forces changes from 0.04 lbf to 0.005 lbf, the MAE and RMSPE of KALEN and stochastic Kriging also decrease. This result is consistent with the conclusions in Theorem \ref{Thm:main} and Proposition \ref{Upprop}. The MAE and RMSPE of KALEN are slightly smaller than the MAE and RMSPE of stochastic Kriging. Generally speaking, their performances are comparable, especially when the SD of actuators' forces is small. The main reason is that, when the uncertainty in the input variables is small, stochastic Kriging can approximate the best linear unbiased predictor KALEN very well. Since a Gaussian correlation function is used, the computational complexity of KALEN and stochastic Kriging are the same. {The computation time of KALEN is smaller than that of the stochastic Kriging in this example. We conjecture this is because of the different computation time of maximum (pseudo-) likelihood estimation.} In summary, if high-quality actuators are used and the intrinsic noise in the actuators is therefore small, then both KALEN and stochastic Kriging can realize very good prediction performance. When the intrinsic noise in the actuators' forces becomes larger, KALEN outperforms stochastic Kriging.

\section{Conclusions and Discussion}\label{sec:conclusion}
We first summarize our contributions in this work. We have investigated three predictors, KALE, KALEN and stochastic Kriging, as applied to Gaussian processes with input location error. When predicting the mean Gaussian process output at an unobserved point with intrinsic noise, we prove that the limits of MSPE of KALEN and stochastic Kriging are the same as the fill distance of the design points goes to zero. If there is no noise at an unobserved point, we provide an upper bound on the MSPE of KALE and stochastic Kriging. The upper bound is close to zero if the noise is small, which implies the MSPE of KALE and stochastic Kriging are close. We also provide an asymptotic upper bound on the MSPE of KALE/KALEN and stochastic Kriging with estimated parameters. These results indicate that if the number of data points is large or the variance of the intrinsic noise is small, then there is not much difference between KALE/KALEN and stochastic Kriging in terms of prediction accuracy. The numeric results corroborate our theory. A case study is presented to illustrate the performance of KALEN and stochastic Kriging for modeling in the composite parts assembly process. 

The calculation of the predictor \eqref{BLUPNoN} is not efficient if the integrals in \eqref{eq:RvectorNoi} and \eqref{eq:KernelMatrixNoi} do not have an analytic form. If the sample size is large, then using pseudo maximum likelihood to estimate the unknown parameters is challenging, especially when the integrals in \eqref{eq:RvectorNoi} and \eqref{eq:KernelMatrixNoi} do not have analytic forms. In this case, using stochastic Kriging as an alternative would be more desirable.

{There are several problems remain to be solved.} In this paper, the MSPE of KALE, KALEN, and stochastic Kriging are primarily considered asymptotically, {i.e., the number of design points goes to infinity}. The theory does not cover the results under non-asymptotic cases, {i.e., the number of design points is fixed}. It can be expected that the difference between the MSPE of KALE/KALEN and stochastic Kriging will decrease as the fill distance decreases. {If there is no noise on an unobserved point, only upper bounds are obtained for KALE and stochastic Kriging. The asymptotic performance of KALE and stochastic Kriging when unobserved point has no noise will be pursued in the future work.}


\appendix

\section{Reproducing Kernel Hilbert Space}\label{app:introrkhs}
In this section we briefly introduce the reproducing kernel Hilbert space used in Assumption \ref{assumpsi}. For detailed introduction to the reproducing kernel Hilbert space, we refer to \cite{wendland2004scattered}. One way to define the reproducing kernel Hilbert space is via Fourier transform, defined by
$$\mathcal{F}(f)(\omega)=(2\pi)^{-d/2}\int_{\mathbb{R}^d} f(x) e^{-ix^T\omega}d x$$ for $f\in L_1(\mathbb{R}^d)$. The definition of the reproducing kernel Hilbert space can be generalized to $f\in L_2(\RR^d)\cap C(\RR^d)$. See \cite{girosi1995regularization} and Theorem 10.12 of \cite{wendland2004scattered}.


\begin{defn}\label{Def:NativeSpace}
Let $\Psi\in L_1(\RR^d)\cap C(\RR^d)$ be a positive definite function. Define the reproducing kernel Hilbert space $\mathcal{N}_\Psi(\RR^d)$ generated by $\Psi$ as
	$$\mathcal{N}_\Psi(\RR^d):=\{f\in L_2(\RR^d)\cap C(\RR^d):\mathcal{F}(f)/\sqrt{\mathcal{F}(\Psi)}\in L_2(\RR^d)\},$$
	with the inner product
	$$\langle f,g\rangle_{\mathcal{N}_\Psi(\RR^d)}=(2\pi)^{-d/2}\int_{\RR^d}\frac{\mathcal{F}(f)(\omega)\overline{\mathcal{F}(g)(\omega)}}{\mathcal{F}(\Psi)(\omega)}d \omega.$$
\end{defn}

By Bochner's theorem (Page 208 of \cite{gihman1974theory}; Theorem 6.6 of \cite{wendland2004scattered}) and Theorem 6.11 of \cite{wendland2004scattered}, if $\Psi$ is a correlation function (thus positive definite), there exists a density function $f_\Psi$ such that 
\begin{align*}
\Psi(x)=\int_{\mathbb{R}^d} e^{i \omega^T x}  f_\Psi(\omega) d \omega
\end{align*}
for any $x\in \RR^d$. The function $f_\Psi$ is known as the \textit{spectral density} of $\Psi$. 

For a positive number $\nu > d/2$, the Sobolev space on $\RR^d$ with smoothness $\nu$ can be defined as
\begin{align*}
H^\nu(\mathbb{R}^d) = \{f\in L_2(\mathbb{R}^d): |\mathcal{F}(f)(\cdot)| (1+\|\cdot\|_2^2)^{\nu/2}\in L_2(\mathbb{R}^d)\},
\end{align*}
equipped with an inner product
$$\langle f,g\rangle_{H^\nu(\mathbb{R}^d)}=(2\pi)^{-d/2}\int_{\RR^d}\mathcal{F}(f)(\omega)\overline{\mathcal{F}(g)(\omega)}(1+\|\omega\|_2^2)^{\nu}d \omega.$$ It can be shown that $H^\nu(\mathbb{R}^d)$ coincides with the reproducing kernel Hilbert space $\mathcal{N}_\Psi(\RR^d)$, if $\Psi$ satisfies Condition \ref{C1} (\cite{wendland2004scattered}, Corollary 10.13).

\begin{condition}\label{C1}
	There exist constants $c_2 \geq c_1>0$ and $\nu>0$ such that, for all $\omega\in\mathbb{R}^d$,
	$$ c_1(1+\|\omega\|_2^2)^{-\nu} \leq  f_\Psi(\omega)\leq c_2(1+\|\omega\|_2^2)^{-\nu}. $$
\end{condition}
The isotropic Mat\'ern correlation function \eqref{matern} has the spectral density \cite{tuo2015theoretical}
\begin{eqnarray*}
f_{\Psi_M}(\omega;\nu,\phi)= \pi^{-d/2}\frac{\Gamma(\nu+d/2)}{\Gamma(\nu)}(4\nu \phi^2)^{\nu} (4\nu\phi^2+\|\omega\|_2^2)^{-(\nu+d/2)}.
\end{eqnarray*}
We can see $\Psi_M$ satisfies Condition \ref{C1}. Thus, the reproducing kernel Hilbert space generated by $\Psi_M$ coincides with the Sobolev space $H^{\nu +d/2}$, which implies $\Psi_M$ fulfills Assumption \ref{assumpsi}.

The isotropic Gaussian correlation function $\Psi_G(x) = e^{-\theta \|x\|^2}$ has the spectral density (Theorem 5.20 of \cite{wendland2004scattered})
\begin{eqnarray*}
    f_{\Psi_G}(\omega) = (4\pi\theta)^{-d/2}e^{-\|\omega\|_2^2/(4\theta)}.
\end{eqnarray*}
Since for any fixed $\nu$, $f_{\Psi_G}(\omega)\leq C(1+\|\omega\|_2^2)^{-\nu-d/2}$ for some constant $C$ not depending on $\omega$, the reproducing kernel Hilbert space generated by $\Psi_G$ can be embedded the Sobolev space $H^{\nu+d/2}(\mathbb{R}^d)$. This implies $\Psi_G$ fulfills Assumption \ref{assumpsi}.

A reproducing kernel Hilbert space can also be defined on a suitable subset (for example, convex and compact) $\Omega\subset \RR^d$, denoted by $\mathcal{N}_\Psi(\Omega)$, with norm
\begin{eqnarray*}
\|f\|_{\mathcal{N}_\Psi(\Omega)}=\inf\{\|f_E\|_{\mathcal{N}_\Psi(\RR^d)}:f_E\in\mathcal{N}_\Psi(\RR^d),f_E|_\Omega=f\},
\end{eqnarray*}
where $f_E|_\Omega$ denotes the restriction of $f_E$ to $\Omega$. A Sobolev space on $\Omega$ can be defined in a similar way. By the extension theorem \cite{devore1993besov}, the reproducing kernel Hilbert space defined on space $\Omega$ generated by $\Psi_M$ and $\Psi_G$ can be embedded into the Sobolev space $H^{\nu+d/2}(\Omega)$.
}

\section{A Lemma about MSPE of Stochastic Kriging}\label{app:A}

\begin{lemma}\label{lemma2skzero}
Assume Assumptions \ref{assumpsi} and \ref{assumX} are true for a positive definite function $\Psi$ and a sequence of designs $X=\{x_1,\ldots,x_n\}$. Then for any fixed constant $\mu>0$, $\Psi(x-x) - \Psi(x-X)(\Psi(X-X) + \mu I)^{-1}\Psi(x-X)^T$ converges to zero as the fill distance of $X$ goes to zero, where $\Psi(x-X) = (\Psi(x-x_1),\ldots,\Psi(x-x_n))^T$ and $\Psi(X-X) = (\Psi(x_j-x_k))_{jk}$. 
\end{lemma}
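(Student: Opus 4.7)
The plan is to express the quantity of interest as the minimum of a regularized RKHS approximation problem and then exhibit an explicit, near-optimal choice of weights. Write $\mathcal{N}_\Psi$ for the RKHS generated by $\Psi$ and set $\Psi(x-X):=(\Psi(x-x_1),\dots,\Psi(x-x_n))^T$. Using the reproducing property $\langle\Psi(u-\cdot),\Psi(v-\cdot)\rangle_{\mathcal{N}_\Psi}=\Psi(u-v)$, for every $\alpha\in\RR^n$,
\begin{align*}
\left\|\Psi(x-\cdot)-\sum_{j=1}^n\alpha_j\Psi(x_j-\cdot)\right\|_{\mathcal{N}_\Psi}^2+\mu\|\alpha\|_2^2 = \Psi(0)-2\alpha^T\Psi(x-X)+\alpha^T(\Psi(X-X)+\mu I_n)\alpha.
\end{align*}
The right-hand side is a strictly convex quadratic whose minimizer is $\alpha^*=(\Psi(X-X)+\mu I_n)^{-1}\Psi(x-X)$, so substituting back gives
\begin{align*}
\Psi(0)-\Psi(x-X)^T(\Psi(X-X)+\mu I_n)^{-1}\Psi(x-X) = \inf_{\alpha\in\RR^n}\Bigl[\|\Psi(x-\cdot)-\textstyle\sum_j\alpha_j\Psi(x_j-\cdot)\|_{\mathcal{N}_\Psi}^2+\mu\|\alpha\|_2^2\Bigr].
\end{align*}
Hence it suffices to construct, for each $n$, a feasible $\alpha$ whose cost on the right vanishes as $h_X\to 0$.

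For the construction, pick a radius $\delta_n\to 0$ with $\delta_n/h_X\to\infty$, for instance $\delta_n=\sqrt{h_X}$. By Assumption \ref{assumX} the distinct design points are quasi-uniform and, since $\Omega$ is convex and compact, it satisfies an interior cone condition; a standard packing estimate then yields an integer $k_n\gtrsim(\delta_n/h_X)^d\to\infty$ of design points $\{x_j:j\in S_n\}$ lying in $B_{\delta_n}(x)\cap\Omega$. Set $\alpha_j=1/k_n$ for $j\in S_n$ and zero otherwise, so that $\mu\|\alpha\|_2^2=\mu/k_n\to 0$. Writing $\Psi(u-v)=\phi(\|u-v\|_2)$ and using $\phi\leq 1$ together with $\phi(\|x-x_j\|_2)\geq\phi(\delta_n)$ for $j\in S_n$,
\begin{align*}
\left\|\Psi(x-\cdot)-\frac{1}{k_n}\sum_{j\in S_n}\Psi(x_j-\cdot)\right\|_{\mathcal{N}_\Psi}^2 &= 1-\frac{2}{k_n}\sum_{j\in S_n}\phi(\|x-x_j\|_2)+\frac{1}{k_n^2}\sum_{j,l\in S_n}\phi(\|x_j-x_l\|_2)\\
&\leq 2(1-\phi(\delta_n)).
\end{align*}
Since $\mathcal{N}_\Psi\hookrightarrow H^m(\Omega)\subset C(\Omega)$ by the Sobolev embedding (valid because $m>d/2$), every function $\Psi(\cdot-y)$ is continuous, so $\phi$ is continuous at $0$ and $\phi(\delta_n)\to\phi(0)=1$. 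Both terms vanish and the claim follows.

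The main obstacle is the packing bound $k_n\gtrsim(\delta_n/h_X)^d$ holding uniformly in $x\in\Omega$: the fill-distance property places at least one design point in every subregion of diameter on the order of $h_X$ sitting inside $\Omega$, Assumption \ref{assumX} keeps these points distinct, and the interior cone condition coming from convexity of $\Omega$ guarantees $\mathrm{vol}(B_{\delta_n}(x)\cap\Omega)\gtrsim\delta_n^d$ uniformly in $x$. Once the counting estimate is in place, the rest of the proof is purely algebraic; the crucial structural feature is that uniform-averaging weights have $\ell_2$-norm of order $1/\sqrt{k_n}$, which suppresses the penalty $\mu\|\alpha\|_2^2$ without enlarging the approximation error beyond $2(1-\phi(\delta_n))$.
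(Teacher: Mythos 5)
Your proof is correct, and it takes a genuinely different route from the paper's. You rewrite the target quantity via the variational identity $\Psi(x-x)-\Psi(x-X)^T(\Psi(X-X)+\mu I_n)^{-1}\Psi(x-X)=\inf_{\alpha\in\mathbb{R}^n}\bigl\{\|\Psi(x-\cdot)-\sum_{j=1}^n\alpha_j\Psi(x_j-\cdot)\|_{\mathcal{N}_\Psi}^2+\mu\|\alpha\|_2^2\bigr\}$ and then beat the infimum with explicit local-averaging weights, yielding the bound $2(1-\phi(\delta_n))+\mu/k_n\to 0$ with $\delta_n=\sqrt{h_X}$ and $k_n\gtrsim(\delta_n/h_X)^d$. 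The paper instead collapses replicates to distinct points with a rescaled nugget, bounds the quantity by the sup norm of the residual $g$, and controls $\|g\|_{L_\infty(\Omega)}$ through an interpolation inequality between $\|g\|_{L_2(\Omega)}$ and $\|g\|_{H^\nu(\Omega)}$, an RKHS bound on $\|g\|_{H^\nu(\Omega)}$, and a sampling inequality (Utreras) applied to the regularized least-squares characterization of $g$. Your route is more elementary and self-contained: it needs only the fill distance, monotonicity of $\phi$, continuity of $\Psi$ at the origin, and the fact that averaging weights have small $\ell_2$ norm; in particular the quasi-uniformity in Assumption \ref{assumX} is not actually needed for your packing count (distinctness of the $k_n$ design points already follows from taking the auxiliary packing centers more than $2h_X$ apart), so your appeal to it is superfluous rather than wrong, the replicate-reduction step disappears entirely, and the Sobolev-embedding detour for continuity of $\phi$ at $0$ could be skipped since $\Psi\in C(\mathbb{R}^d)$ is part of the standing setup in Appendix \ref{app:introrkhs}. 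What the paper's heavier machinery buys is, first, explicit control in terms of $h_X$, $n$, and the smoothness $\nu$, whereas your bound's rate is governed by the modulus of continuity of $\phi$ near $0$ (e.g.\ order $\sqrt{h_X}$ for Lipschitz $\phi$), and second, a template reused essentially verbatim for the estimated-parameter version (Lemma \ref{lemma2skallzero}), where the uniform embedding constants are exactly what keep the argument valid across the estimated kernels; your construction would also extend there, but only after verifying a uniform modulus of continuity at $0$ for the estimated correlation functions.
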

\begin{proof}
{
Let $\bar X=\{\bar x_1,...,\bar x_m\}$ be the distinct design points corresponding to $X$. At each design point $\bar x_j\in \bar X$, suppose there are $a_j$ replicates, thus,
\begin{gather}
X=\{\underbrace{\bar x_1,\ldots,\bar x_1}_{a_1\;{\rm replications}},\underbrace{\bar x_2,\ldots,\bar x_2}_{a_2\;{\rm replications}},\ldots, \underbrace{\bar x_m,\ldots,\bar x_m}_{a_m\;{\rm replications}}\}. \nonumber
\end{gather}
It can be shown that $\Psi(x-x) - \Psi(x-X)(\Psi(X-X) + \mu I)^{-1}\Psi(x-X)^T = \Psi(x-x) - \Psi(x-\bar X)(\Psi(\bar X-\bar X) + \Lambda I)^{-1}\Psi(x-\bar X)^T$, where $\Lambda={\rm diag}(\lambda_1,...,\lambda_m)$ with $\lambda_j=\mu/a_j$ (See Lemma 3.1 of \cite{binois2018practical} and the proof of Proposition 3.1 of \cite{wang2018controlling}). Let $a=\min_j a_j$. We have
\begin{align*}
    & \Psi(x-x) - \Psi(x-X)(\Psi(X-X) + \mu I)^{-1}\Psi(x-X)^T\\  = & \Psi(x-x) - \Psi(x-\bar X)(\Psi(\bar X-\bar X) + \Lambda I)^{-1}\Psi(x-\bar X)^T \\
   \leq  & \Psi(x-x) - \Psi(x-\bar X)(\Psi(\bar X-\bar X) + \mu/a I)^{-1}\Psi(x-\bar X)^T\\
   \leq & \|\Psi(\cdot-x) - \Psi(\cdot-\bar X)^T(\Psi(\bar X-\bar X) + \mu/a I)^{-1}\Psi(x-\bar X)^T\|_{L_\infty(\Omega)},
\end{align*}
where the first inequality is because $(\Psi(\bar X-\bar X) + \Lambda I)^{-1} \succeq (\Psi(\bar X-\bar X) + \mu/a I)^{-1}$. Here $A\succeq B$ denotes that for any vector $b$, $b^T(A-B)b\geq 0$.}

{ Define $g(t) = \Psi(t-x) - \Psi(t-\bar X)(\Psi(\bar X-\bar X) + \mu/a I)^{-1}\Psi(x-\bar X)^T$. Under Assumption \ref{assumpsi}, we have $g\in H^\nu(\Omega)$, where $H^\nu(\Omega)$ is the Sobolev space with smoothness $\nu$. By the interpolation inequality, $\|g\|_{L_\infty(\Omega)}\leq C_1\|g\|_{L_2(\Omega)}^{1 - \frac{d}{2\nu}}\|g\|_{H^\nu(\Omega)}^{\frac{d}{2\nu}}$. By Corollary 10.25 in \cite{wendland2004scattered} and the fact that $\Psi(\bar X-\bar X)^{-1}\succeq(\Psi(\bar X-\bar X) + \mu/a I)^{-1} $, it can be shown that
\begin{align*}
    & \|g\|_{H^\nu(\Omega)}\leq C_2 \|g\|_{\mathcal{N}_{\Psi}(\Omega)}\\ 
    \leq & C_2 (\Psi(x-x) - 2\Psi(x-\bar X)(\Psi(\bar X-\bar X) + \mu/a I)^{-1}\Psi(x-\bar X)^T\\ 
    & + \Psi(x-\bar X)(\Psi(\bar X-\bar X) + \mu/a I)^{-1}\Psi(\bar X-\bar X)(\Psi(\bar X-\bar X) + \mu/a I)^{-1}\Psi(x-\bar X)^T) \\
    \leq & C_2 (\Psi(x-x) - \Psi(x-\bar X)(\Psi(\bar X-\bar X) + \mu/a I)^{-1}\Psi(x-\bar X)^T) \leq C_2\Psi(x-x), 
\end{align*}
where $\|g\|_{\mathcal{N}_{\Psi}(\Omega)}$ is the norm of $g$ in the reproducing kernel Hilbert space $\mathcal{N}_{\Psi}(\Omega)$. Thus, the result follows if we can show $\|g\|_{L_2(\Omega)}$ converges to zero. By the representer theorem, $\hat g_1(t):=\Psi(t-\bar X)(\Psi(\bar X-\bar X) + \mu/a I)^{-1}\Psi(x-\bar X)^T$ is the solution to the optimization problem
\begin{align}\label{lemma2a1}
    \min_{g_1\in \mathcal{N}_{\Psi}(\Omega)} \frac{1}{n}\sum_{j=1}^n (g_1(\bar x_j) - \Psi(x-\bar x_j))^2 +\frac{\mu}{an} \|g_1\|_{\mathcal{N}_{\Psi}(\Omega)}^2.
\end{align}
Note $g(t) = \Psi(t-x) - \hat g_1(t)$. Under Assumption \ref{assumX}, by Lemma 3.4 of \cite{utreras1988convergence}, the result follows from
\begin{align*}
    \|g\|_{L_2}^2 \leq & C_3 \bigg(\frac{1}{n}\sum_{j=1}^n (\hat g_1(\bar x_j) - \Psi(x-\bar x_j))^2 + h_{\bar X}^{2\nu} \|g\|_{H^\nu(\Omega)}^2\bigg)\\
    \leq & C_3 \bigg(\frac{1}{n}\sum_{j=1}^n (\hat g_1(\bar x_j) - \Psi(x-\bar x_j))^2 +\frac{\mu}{an} \|\hat g_1\|_{\mathcal{N}_{\Psi}(\Omega)}^2 + h_{\bar X}^{2\nu} \|g\|_{H^\nu(\Omega)}^2\bigg)\\
    \leq & C_3 \bigg(\frac{1}{n}\sum_{j=1}^n ( \Psi(x-\bar x_j) - \Psi(x-\bar x_j))^2 +\frac{\mu}{an} \|\Psi(x-\cdot)\|_{\mathcal{N}_{\Psi}(\Omega)}^2 + h_{\bar X}^{2\nu} \|g\|_{H^\nu(\Omega)}^2\bigg)\rightarrow 0,
\end{align*}
where the last inequality is true because $\hat g_1$ is the solution to \eqref{lemma2a1}.}
\end{proof}

\section{Calculation of \eqref{eq:KernelMnormal}}\label{app:eq8}
{ In this section, we show that if the correlation function is $\Psi(s-t) = \exp(-\theta\|s - t\|_2^2)$, and the noise $\epsilon \sim N(0,\sigma_\epsilon^2I_d)$, where $\theta > 0$ is the correlation parameter, and $N(0,\sigma_\epsilon^2I_d)$ is the mean zero normal distribution with covariance matrix $\sigma_\epsilon^2I_d$, then (\ref{eq:RvectorNoi})--(\ref{eq:RvectorNoiX}) can be calculated respectively as in \eqref{eq:KernelMnormal}. Let $p_N(t)$ be the probability density function of normal distribution $N(0,\sigma_\epsilon^2I_d)$, i.e.,
\begin{align*}
    p_N(t) = \frac{1}{\sqrt{(2\pi\sigma_\epsilon^2)^d}}\exp\left(-\frac{t^Tt}{2\sigma_\epsilon^2}\right).
\end{align*}
The idea of calculating (\ref{eq:RvectorNoi})--(\ref{eq:RvectorNoiX}) is to utilize 
\begin{align*}
    \int_{\RR^d} \frac{1}{(2\pi a^2)^{d/2}} \exp\bigg(-\frac{\|s-b\|_2^2}{2a^2}\bigg) ds = 1
\end{align*}
for $a>0$ multiple times. By direct calculation, we have
\begin{align}\label{appeq8rN1}
    r_N(x,x_j) = & \sigma^2\int_{\RR^d}\int_{\RR^d}\Psi(x+\epsilon-(x_j+\epsilon_j))p(\epsilon_j)p(\epsilon)d\epsilon_jd\epsilon\nonumber\\
     = & \sigma^2\int_{\RR^d}\int_{\RR^d} \exp(-\theta\|x+\epsilon-(x_j+\epsilon_j)\|_2^2) \frac{1}{\sqrt{(2\pi\sigma_\epsilon^2)^d}}\exp\left(-\frac{\epsilon_j^T\epsilon_j}{2\sigma_\epsilon^2}\right)\frac{1}{\sqrt{(2\pi\sigma_\epsilon^2)^d}}\exp\left(-\frac{\epsilon^T\epsilon}{2\sigma_\epsilon^2}\right)d\epsilon_jd\epsilon\nonumber\\
     = & \sigma^2 \frac{\exp(-\theta \|x-x_j\|_2^2)}{(2\pi\sigma_\epsilon^2)^d} \int_{\RR^d}\int_{\RR^d}  \exp\left(-\bigg(\theta + \frac{1}{2\sigma_\epsilon^2}\bigg)\epsilon^T\epsilon-2\theta(x-x_j-\epsilon_j)^T\epsilon\right)d\epsilon\nonumber\\
     & \times \exp\left(-\bigg(\theta + \frac{1}{2\sigma_\epsilon^2}\bigg)\epsilon_j^T\epsilon_j+2\theta(x-x_j)^T\epsilon_j\right) d\epsilon_j.
\end{align}
We first compute
\begin{align}\label{appeq8rN2}
    & \int_{\RR^d} \exp\left(-\bigg(\theta + \frac{1}{2\sigma_\epsilon^2}\bigg)\epsilon^T\epsilon-2\theta(x-x_j-\epsilon_j)^T\epsilon\right)d\epsilon\nonumber\\
    = & \int \exp\bigg(-\bigg(\theta + \frac{1}{2\sigma_\epsilon^2}\bigg)\left\|\epsilon + \frac{\theta(x-x_j-\epsilon_j)}{\theta + \frac{1}{2\sigma_\epsilon^2}}\right\|_2^2 + \frac{\theta^2}{\bigg(\theta + \frac{1}{2\sigma_\epsilon^2}\bigg)}\|x-x_j-\epsilon_j\|_2^2\bigg)d\epsilon\nonumber\\
    = & \exp\bigg(\frac{2\sigma_\epsilon^2\theta^2}{1+2\sigma_\epsilon^2\theta}\|x-x_j-\epsilon_j\|_2^2\bigg) \sqrt{\bigg(2\pi\frac{\sigma_\epsilon^2}{1+2\theta\sigma_\epsilon^2}\bigg)^d}.
\end{align}
Plugging \eqref{appeq8rN2} into \eqref{appeq8rN1} yields
\begin{align}\label{appeq8rN3}
    r_N(x,x_j) = & \sigma^2 \frac{\exp(-\theta \|x-x_j\|_2^2)}{(2\pi\sigma_\epsilon^2)^d} \sqrt{\bigg(2\pi\frac{\sigma_\epsilon^2}{1+2\theta\sigma_\epsilon^2}\bigg)^d} \int_{\RR^d} \exp\bigg(\frac{2\sigma_\epsilon^2\theta^2}{1+2\sigma_\epsilon^2\theta}\|x-x_j-\epsilon_j\|_2^2\bigg) \nonumber\\
    & \times \exp\left(-\bigg(\theta + \frac{1}{2\sigma_\epsilon^2}\bigg)\epsilon_j^T\epsilon_j+2\theta(x-x_j)^T\epsilon_j\right) d\epsilon_j.
\end{align}
We next compute 
\begin{align}\label{appeq8rN4}
    & \int_{\RR^d} \exp\bigg(\frac{2\sigma_\epsilon^2\theta^2}{1+2\sigma_\epsilon^2\theta}\|x-x_j-\epsilon_j\|_2^2\bigg) \exp\left(-\bigg(\theta + \frac{1}{2\sigma_\epsilon^2}\bigg)\epsilon_j^T\epsilon_j+2\theta(x-x_j)^T\epsilon_j\right) d\epsilon_j\nonumber\\
    = & \exp\bigg(\frac{2\sigma_\epsilon^2\theta^2}{1+2\sigma_\epsilon^2\theta}\|x-x_j\|_2^2\bigg)  \int_{\RR^d} \exp\bigg(-\bigg( \theta + \frac{1}{2\sigma_\epsilon^2} - \frac{2\sigma_\epsilon^2\theta^2}{1+2\sigma_\epsilon^2\theta} \bigg)\epsilon_j^T\epsilon_j + 2\bigg(\theta - \frac{2\sigma_\epsilon^2\theta^2}{1+2\sigma_\epsilon^2\theta}\bigg)(x-x_j)^T\epsilon_j\bigg)d\epsilon_j\nonumber\\
    = & \exp\bigg(\frac{2\sigma_\epsilon^2\theta^2}{1+2\sigma_\epsilon^2\theta}\|x-x_j\|_2^2\bigg)  \int_{\RR^d} \exp\bigg(-\bigg( \frac{1+4\sigma_\epsilon^2\theta}{(1+2\sigma_\epsilon^2\theta)\sigma_\epsilon^2} \bigg)\epsilon_j^T\epsilon_j +\frac{2\theta}{1+2\sigma_\epsilon^2\theta}(x-x_j)^T\epsilon_j\bigg)d\epsilon_j\nonumber\\
    = & \exp\bigg(\frac{2\sigma_\epsilon^2\theta^2}{1+2\sigma_\epsilon^2\theta}\|x-x_j\|_2^2\bigg) \sqrt{\bigg(2\pi\frac{(1+2\sigma_\epsilon^2\theta)\sigma_\epsilon^2}{1+4\sigma_\epsilon^2\theta}\bigg)^d} \exp\bigg( \frac{(1+2\sigma_\epsilon^2\theta)\sigma_\epsilon^2}{1+4\sigma_\epsilon^2\theta} \frac{\theta^2}{(1+2\sigma_\epsilon^2\theta)^2}\|x-x_j\|_2^2 \bigg).
\end{align}
By plugging \eqref{appeq8rN4} into \eqref{appeq8rN3}, we obtain
\begin{align}\label{appeq8rN}
    r_N(x,x_j) = & \sigma^2 \frac{\exp(-\theta \|x-x_j\|_2^2)}{(2\pi\sigma_\epsilon^2)^d} \sqrt{\bigg(2\pi\frac{\sigma_\epsilon^2}{1+2\theta\sigma_\epsilon^2}\bigg)^d}\nonumber\\
    & \times \exp\bigg(\frac{2\sigma_\epsilon^2\theta^2}{1+2\sigma_\epsilon^2\theta}\|x-x_j\|_2^2\bigg) \sqrt{\bigg(2\pi\frac{(1+2\sigma_\epsilon^2\theta)\sigma_\epsilon^2}{1+4\sigma_\epsilon^2\theta}\bigg)^d} \exp\bigg( \frac{2(1+2\sigma_\epsilon^2\theta)\sigma_\epsilon^2}{1+4\sigma_\epsilon^2\theta} \frac{\theta^2}{(1+2\sigma_\epsilon^2\theta)^2}\|x-x_j\|_2^2 \bigg)\nonumber\\
    = & \frac{\sigma^2}{(1+4\sigma^2_\epsilon\theta)^{d/2}}\exp\bigg(\frac{-\theta\|x-x_j\|_2^2}{1+4\sigma^2_\epsilon\theta}\bigg),
\end{align}
which is desired. The term $r(x,x_j)$ can be computed by
\begin{align}\label{appeq8r}
    r(x,x_j) = & \sigma^2\int\Psi(x-(x_j+\epsilon_j))p(\epsilon_j)d\epsilon_j\nonumber\\
     = & \sigma^2\int_{\RR^d} \exp(-\theta\|x-(x_j+\epsilon_j)\|_2^2) \frac{1}{\sqrt{(2\pi\sigma_\epsilon^2)^d}}\exp\left(-\frac{\epsilon_j^T\epsilon_j}{2\sigma_\epsilon^2}\right)\nonumber\\
     = & \sigma^2 \frac{\exp(-\theta \|x-x_j\|_2^2)}{\sqrt{(2\pi\sigma_\epsilon^2)^d}} \int_{\RR^d}\exp\left(-\bigg(\theta + \frac{1}{2\sigma_\epsilon^2}\bigg)\epsilon_j^T\epsilon_j+2\theta(x-x_j)^T\epsilon_j\right) d\epsilon_j\nonumber\\
     = & \sigma^2 \frac{\exp(-\theta \|x-x_j\|_2^2)}{\sqrt{(2\pi\sigma_\epsilon^2)^d}} \exp\bigg(\frac{2\sigma_\epsilon^2\theta^2}{1+2\sigma_\epsilon^2\theta}\|x-x_j\|_2^2\bigg) \sqrt{\bigg(2\pi\frac{\sigma_\epsilon^2}{1+2\theta\sigma_\epsilon^2}\bigg)^d}\nonumber\\
     = & \frac{\sigma^2}{(1+2\sigma^2_\epsilon\theta)^{d/2}}\exp\bigg(\frac{-\theta\|x-x_j\|_2^2}{1+2\sigma^2_\epsilon\theta}\bigg).
\end{align}
Note $K_{jk} = r_N(x_j,x_k)$ if $j\neq k$. Together with \eqref{appeq8rN} and \eqref{appeq8r}, we obtain \eqref{eq:KernelMnormal}.}

\section{Proof of Lemma \ref{lemma1}}\label{App:pflemma}
By Fourier transform \cite{wendland2004scattered}, we have
\begin{align}\label{Fourpsi}
\Psi(x_j-x_k) = \frac{1}{(2\pi)^{d/2}}\int_{\mathbb{R}^d} e^{i\langle x_j-x_k,t \rangle}\mathcal{F}(\Psi)(t)dt,
\end{align}
where $\langle s,t \rangle = s^Tt$ is the inner product in $\mathbb{R}^d$. Therefore, by Fubini's theorem, direct calculation leads to
\begin{align}\label{eq:Psi1Fourier}
\Psi_S(x_j - x_k) & = \int_{\mathbb{R}^d}\int_{\mathbb{R}^d} \frac{1}{(2\pi)^{d/2}}\int_{\mathbb{R}^d}e^{i\langle x_j+\epsilon_1-(x_k+\epsilon_2),t \rangle}\mathcal{F}(\Psi)(t)p(\epsilon_1)p(\epsilon_2)dt d\epsilon_1 d\epsilon_2\nonumber \\
& = \frac{1}{(2\pi)^{d/2}}\int_{\mathbb{R}^d}\bigg(\int_{\mathbb{R}^d}\int_{\mathbb{R}^d} e^{i\langle x_j+\epsilon_1-(x_k+\epsilon_2),t \rangle}p(\epsilon_1)p(\epsilon_2)d\epsilon_1 d\epsilon_2\bigg)\mathcal{F}(\Psi)(t)dt\nonumber \\
& = \frac{1}{(2\pi)^{d/2}}\int_{\mathbb{R}^d}e^{i\langle x_j-x_k,t\rangle}\bigg(\int_{\mathbb{R}^d}e^{i\langle \epsilon_1,t \rangle}\int_{\mathbb{R}^d} e^{i\langle -\epsilon_2,t \rangle}p(\epsilon_1)p(\epsilon_2)d\epsilon_1 d\epsilon_2\bigg)\mathcal{F}(\Psi)(t)dt\nonumber \\
& =  \frac{1}{(2\pi)^{d/2}}\int_{\mathbb{R}^d}e^{i\langle x_j-x_k,t\rangle}\bigg(\int_{\mathbb{R}^d} e^{i\langle \epsilon_1,t \rangle}p(\epsilon_1)d\epsilon_1\bigg)\bigg(\int_{\mathbb{R}^d}e^{i\langle -\epsilon_2,t \rangle}p(\epsilon_2)d\epsilon_2\bigg)\mathcal{F}(\Psi)(t)dt.
\end{align}
For any $w=(w_1,\ldots,w_n)^T$, by \eqref{eq:Psi1Fourier}, we have
\begin{align*}
& \sum_{j,k=1}^n w_j\bar w_k \Psi_S(x_j - x_k)\\
= & \sum_{j,k=1}^n w_j\bar w_k\frac{1}{(2\pi)^{d/2}}\int_{\mathbb{R}^d}e^{i\langle x_j-x_k,t\rangle} \bigg(\int_{\mathbb{R}^d} e^{i\langle \epsilon_1,t \rangle}p(\epsilon_1)d\epsilon_1\bigg)\bigg(\int_{\mathbb{R}^d}e^{i\langle -\epsilon_2,t \rangle}p(\epsilon_2)d\epsilon_2\bigg)\mathcal{F}(\Psi)(t)dt\\
= & \frac{1}{(2\pi)^{d/2}}\int_{\mathbb{R}^d}\bigg|\sum_{j=1}^n w_j e^{i\langle x_j,t \rangle}\bigg|^2\bigg(\int_{\mathbb{R}^d} e^{i\langle \epsilon_1,t \rangle}p(\epsilon_1)d\epsilon_1\bigg)\bigg(\int_{\mathbb{R}^d}e^{i\langle -\epsilon_2,t \rangle}p(\epsilon_2)d\epsilon_2\bigg)\mathcal{F}(\Psi)(t)dt.
\end{align*}
Let
\begin{align*}
c(t) = \bigg(\int_{\mathbb{R}^d} e^{i\langle \epsilon_1,t \rangle}p(\epsilon_1)d\epsilon_1\bigg)\bigg(\int_{\mathbb{R}^d}e^{i\langle -\epsilon_2,t \rangle}p(\epsilon_2)d\epsilon_2\bigg).
\end{align*}
Thus, $c(t) \in \mathbb{R}$ and $c(t) > 0$. Therefore, $\sum_{j,k=1}^n w_j\bar w_k \Psi_S(x_j - x_k) \geq 0$, and equal to zero if and only if $w = 0$, which finishes the proof.

\section{Proof of Theorem \ref{propMSPEKALEN}}\label{App:pfThmkalen}

Consider the following Gaussian process with extrinsic error,
\begin{align}\label{eq:stomoding}
y_S(x) = M_S(x) + \delta(x),
\end{align}
where $M_S$ is a mean zero Gaussian process with covariance function $\sigma^2\Psi_S$, and $\delta(x)$ is an independent noise process with mean zero and variance $\mu$. The best linear unbiased predictor of \eqref{eq:stomoding} is 
\begin{align}\label{eq:BLUPSK}
\hat f_S(x) = r_N(x)^T(K_S + \mu I_n)^{-1}Y,
\end{align}
and the MSPE is 
\begin{align}\label{eq:stoMSPE}
{\rm MSPE}_S = \sigma^2\Psi_S(x-x) - r_N(x)^T(K_S + \mu I_n)^{-1}r_N(x).
\end{align}
{By Lemma \ref{lemma2skzero}}, \eqref{eq:stoMSPE} goes to zero as the fill distance of design points $X$ goes to zero.  


Take $\mu = \sigma^2(\Psi(x-x) - \Psi_S(x-x))$. It can be seen that \eqref{eq:stoMSPE} is equal to $\sigma^2\Psi_S(x-x) - r_N(x)K^{-1}r_N(x)$. By \eqref{eq:MSPEKALEN}, $\mathbb{E}(y(x) - \hat{y}(x))^2 ={\rm MSPE}_S + \sigma^2(\Psi(x-x) - \Psi_S(x-x))$, which converges to $\sigma^2(\Psi(x-x) - \Psi_S(x-x))$ as the fill distance of the design points goes to zero. This completes the proof.

\section{Proof of Theorem \ref{Thm:main}}\label{App:pfThmmain}
Without loss of generality, assume $\sigma = 1$. First, we consider there is noise at an unobserved point. For any $u = (u_1,\ldots,u_n)^T$, it can be shown that the MSPE of predictor $u^TY$ is
\begin{align}\label{pfthm2eq1}
 & \mathbb{E}\bigg\|\Psi(\cdot- (x + \epsilon)) - \sum_{i=1}^n u_i\Psi(\cdot-( x_i + \epsilon))\bigg\|^2_{\mathcal{N}_{\Psi}}\nonumber\\   
 = & \Psi(x-x) - 2\sum_{j=1}^n u_j\Psi_S(x-x_j) +\sum_{j,k=1}^n u_j u_k\Psi_S(x_j - x_k) + a\|u\|_2^2,
\end{align}
where $\|\cdot\|_{\mathcal{N}_{\Psi}(\Omega)}$ is the norm of the reproducing kernel Hilbert space $\mathcal{N}_{\Psi}(\Omega)$ and $a =  \Psi(x-x) - \Psi_S(x-x)$.
Notice that 
\begin{align*}
\Psi_S(x_j - x_k) & = \frac{1}{(2\pi)^{d/2}}\int_{\mathbb{R}^d}e^{i\langle x_j-x_k,t \rangle}c(t)\mathcal{F}(\Psi)(t)dt,
\end{align*}
where
\begin{align*}
c(t) = \bigg(\int_{\mathbb{R}^d} e^{i\langle \epsilon_j,t \rangle}p(\epsilon_j)d\epsilon_j\bigg)\bigg(\int_{\mathbb{R}^d}e^{i\langle -\epsilon_k,t \rangle}p(\epsilon_k)d\epsilon_k\bigg).
\end{align*}
Since $|e^{i\langle -\epsilon_j,t \rangle}|\leq 1$, $c(t)\leq 1$. Therefore, \eqref{pfthm2eq1} can be bounded by
\begin{align}\label{pfthm2neq1}
&  \Psi(x-x) - 2\sum_{j=1}^n u_j\Psi_S(x-x_j) +\sum_{j,k=1}^n u_j u_k\Psi_S(x_j - x_k) + a\|u\|_2^2\nonumber\\
 = & u^T \Psi_S(X-X) u -2u^T \Psi_S(X-x) + \Psi_S(x-x) + a\|u\|_2^2\nonumber+a\nonumber\\
= & \frac{1}{(2\pi)^{d/2}}\int_{\mathbb{R}^d}\bigg|\sum_{j=1}^n u_j e^{i\langle x_j,t \rangle} - e^{i\langle x,t \rangle}\bigg|^2 c(t) \mathcal{F}(\Psi)(t)dt+ a\|u\|_2^2+a\nonumber\\
\leq & \frac{1}{(2\pi)^{d/2}}\int_{\mathbb{R}^d}\bigg|\sum_{j=1}^n u_j e^{i\langle x_j,t \rangle} - e^{i\langle x,t \rangle}\bigg|^2 \mathcal{F}(\Psi)(t)dt+ a\|u\|_2^2+a\nonumber\\
= &  u^T \Psi(X-X) u -2u^T \Psi(X-x) + \Psi(x-x) + a\|u\|_2^2+a\nonumber\\
\leq & \max\{1,a/\mu\}(u^T \Psi(X-X) u -2u^T \Psi(X-x) + \Psi(x-x) + \mu\|u\|_2^2)+a,
\end{align} 
where $ \Psi(X-x) =  \Psi(x-X)^T$. Plugging
\begin{align*}
u = (\Psi(X-X)+\mu I)^{-1}\Psi(X-x),
 \end{align*}
into \eqref{pfthm2eq1} and \eqref{pfthm2neq1}, we have the MSPE of predictor \eqref{eq:BLUPSK1} upper bounded by
\begin{align*}
\max\{1,a/\mu\}(\Psi(x-x) - \Psi(x-X)(\Psi(X-X)+\mu I)^{-1}\Psi(X-x))+a.
\end{align*}
By Lemma \ref{lemma2skzero}, $\Psi(x-x) - \Psi(x-X)(\Psi(X-X)+\mu I)^{-1}\Psi(X-x)$ converges to zero as the fill distance goes to zero since $\mu$ is a constant, which completes the proof in this case.

Next, we consider the case that there is no noise at an unobserved point. For any $u = (u_1,\ldots,u_n)^T$, it can be shown that the MSPE of predictor $u^TY$ in this case is
\begin{align}\label{pfthm2eq2}
 & \mathbb{E}\bigg\|\Psi(\cdot- x) - \sum_{j=1}^n u_j\Psi(\cdot-( x_j + \epsilon))\bigg\|^2_{\mathcal{N}_{\Psi}}\nonumber\\   
 = & u^T \Psi_S(X-X) u -2u^T r(x) + \Psi(x-x) + a\|u\|_2^2.
\end{align}
Let $b(t) = \int_{\mathbb{R}^d}e^{i\langle \epsilon_i,t \rangle}h(\epsilon_i)d\epsilon_i$. Thus, for any $u = (u_1,\ldots,u_n)^T$, we have
\begin{align}\label{pfthm2neq2}
& u^T \Psi_S(X-X) u -2u^T r(x) + \Psi(x-x) + a\|u\|_2^2\nonumber\\
= & \frac{1}{(2\pi)^{d/2}}\int_{\mathbb{R}^d}\bigg|\sum_{j=1}^n  u_j e^{i\langle x_j,t \rangle}b(t) - e^{i\langle x,t \rangle}\bigg|^2 \mathcal{F}(\Psi)(t)dt+ a\|u\|_2^2\nonumber\\
\leq & \frac{1 + C^{2}}{(2\pi)^{d/2}}\int_{\mathbb{R}^d}\bigg|\sum_{j=1}^n u_j e^{i\langle x_j,t \rangle} - e^{i\langle x,t \rangle}\bigg|^2 |b(t)|^2\mathcal{F}(\Psi)(t)dt  + \frac{1 + C^{-2}}{(2\pi)^{d/2}}\int_{\mathbb{R}^d} |1-|b(t)||^2\mathcal{F}(\Psi)(t)dt +  a\|u\|_2^2\nonumber\\
\leq & (1 + C^2)( u^T \Psi(x-x) u -2u^T \Psi(X-x) + \Psi(x-x)) + a\|u\|_2^2 + (1 + C^{-2})\frac{1}{(2\pi)^{d/2}}\int_{\mathbb{R}^d} |1-|b(t)||^2\mathcal{F}(\Psi)(t)dt \nonumber\\
\leq & \max\{(1 + C^2),a/\mu\}(u^T \Psi(X-X) u -2u^T \Psi(X-x) + \Psi(x-x) + \mu\|u\|_2^2) + \frac{1 + C^{-2}}{(2\pi)^{d/2}}\int_{\mathbb{R}^d} |1-|b(t)||^2\mathcal{F}(\Psi)(t)dt,
\end{align}
where we use $2\langle a,b\rangle\leq C^2|a|^2 + C^{-2}|b|^2$ in the first inequality, with $C$ a fixed constant. Plugging
\begin{align*}
u = (\Psi(X-X)+\mu I)^{-1}\Psi(X-x),
 \end{align*}
into \eqref{pfthm2eq2} and \eqref{pfthm2neq2}, we have the MSPE of predictor \eqref{eq:BLUPSK1} upper bounded by
\begin{align*}
\max\{(1 + C^2),a/\mu\}(\Psi(x-x) - \Psi(x-X)(\Psi(X-X)+\mu I)^{-1}\Psi(X-x))+\frac{1 + C^{-2}}{(2\pi)^{d/2}}\int_{\mathbb{R}^d} |1-|b(t)||^2\mathcal{F}(\Psi)(t)dt.
\end{align*}
By Lemma \ref{lemma2skzero}, $\Psi(x-x) - \Psi(x-X)(\Psi(X-X)+\mu I)^{-1}\Psi(X-x)$ converges to zero as the fill distance goes to zero since $\mu$ is a constant. { The constant $C$ influences the number of design points needed such that $\max\{(1 + C^2),a/\mu\}(\Psi(x-x) - \Psi(x-X)(\Psi(X-X)+\mu I)^{-1}\Psi(X-x))$ is close to zero. For a fixed number of design points, the larger $C$ is, the larger $\max\{(1 + C^2),a/\mu\}(\Psi(x-x) - \Psi(x-X)(\Psi(X-X)+\mu I)^{-1}\Psi(X-x))$ is. To derive an explicit bound, we let $C^2 = 25$, which yields an asymptotic upper bound 
\begin{align*}
    \frac{1.04}{(2\pi)^{d/2}}\int_{\mathbb{R}^d} |1-|b(t)||^2\mathcal{F}(\Psi)(t)dt.
\end{align*}
This finishes the proof.}

\section{Proof of Proposition \ref{Upprop}}\label{pfUpprop}
Notice that $\mathbb{E}(e^{i\epsilon_n^Tt})$ converges to $1$ since $\epsilon_n$ converges to $0$ in distribution and $e^{i\epsilon_n^Tt}$ is bounded, and $b(t)$ is bounded for all $t\in \mathbb{R}^d$. By dominated convergence theorem, the result holds.

\section{Proof of Theorem \ref{thmparaest}}\label{pfthmparaest}
We first present a lemma, which is a generalization of Lemma \ref{lemma2skzero}.
\begin{lemma}\label{lemma2skallzero}
Suppose the conditions of Theorem \ref{thmparaest} hold. Then we have $\Psi(x-x) - \hat \Psi(x-X)(\hat\Psi(X-X) + \hat\mu I)^{-1}\hat\Psi(x-X)^T$ converges to zero as the fill distance of $X$ converges to zero, where $\hat \Psi=\hat \Psi_1$ or $\hat \Psi_2$.
\end{lemma}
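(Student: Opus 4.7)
The plan is to reduce the claim to Lemma~\ref{lemma2skzero} applied to $\hat\Psi$, while verifying that every constant appearing in that proof can be chosen independent of $n$ under the assumptions of Theorem~\ref{thmparaest}.

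The key preliminary observation is that Assumption~\ref{assumpsi} requires $\phi(0)=1$ for both $\Psi$ and every $\hat\Psi_i$. Consequently $\Psi(x-x)=\hat\Psi(x-x)=1$, so the quantity of interest equals
\begin{equation*}
\hat\Psi(x-x)-\hat\Psi(x-X)\bigl(\hat\Psi(X-X)+\hat\mu I\bigr)^{-1}\hat\Psi(x-X)^T,
\end{equation*}
which is precisely the expression whose vanishing was established in Lemma~\ref{lemma2skzero}, but now with a kernel and nugget depending on $n$.

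Next, I would replay the four-stage argument behind Lemma~\ref{lemma2skzero}, at each stage extracting a bound that does not depend on $n$. (a)~The reduction to distinct design points uses $\hat\mu/a_j$ in place of $\mu/a_j$; this is harmless because $\hat\mu\ge 1/C$ by hypothesis. (b)~The interpolation inequality $\|g\|_{L_\infty(\Omega)}\le C_1\|g\|_{L_2(\Omega)}^{1-d/(2m)}\|g\|_{H^m(\Omega)}^{d/(2m)}$ is applied in a fixed Sobolev space $H^m(\Omega)$ by Theorem~\ref{thmparaest}(2), so $C_1$ is independent of $n$. (c)~Corollary~10.25 of \cite{wendland2004scattered}, combined with the Sobolev embedding $\mathcal N_{\hat\Psi}(\Omega)\hookrightarrow H^m(\Omega)$, yields $\|g\|_{H^m(\Omega)}\le C_2\|g\|_{\mathcal N_{\hat\Psi}(\Omega)}$ with $C_2$ uniform in $n$, again by (2). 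Moreover, the reproducing property gives $\|\hat\Psi(\cdot-x)\|_{\mathcal N_{\hat\Psi}(\Omega)}^2=\hat\Psi(x-x)=1$ for every $n$. (d)~Applying Lemma~3.4 of \cite{utreras1988convergence} to the sequence of designs, which satisfies Assumption~\ref{assumX}, bounds $\|g\|_{L_2(\Omega)}^2$ by a constant multiple of $\tfrac{1}{n}\sum_j(\hat g_1(\bar x_j)-\hat\Psi(x-\bar x_j))^2+\tfrac{\hat\mu}{an}\|\hat g_1\|_{\mathcal N_{\hat\Psi}(\Omega)}^2+h_{\bar X}^{2m}\|g\|_{H^m(\Omega)}^2$; this constant depends only on $\Omega$ and $m$, hence is uniform in $n$.

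Combining the bounds exactly as in the original proof, together with the reproducing-kernel inequality $\hat g_1\mapsto\hat\Psi(x-\cdot)$ as minimizer, shows that the expression tends to $0$ as $h_X\to 0$. The main obstacle I anticipate is step~(d): confirming that the convergence rate given by Utreras' lemma is uniform across the family $\{\hat\Psi\}$. This is handled because the lemma depends only on the fixed ambient Sobolev norm and the design property, neither of which involves the particular reproducing kernel. A minor point to double-check is the lower bound $\hat\mu\ge 1/C$ used for the replicate-reduction step; this is explicitly assumed in the statement of Theorem~\ref{thmparaest}.
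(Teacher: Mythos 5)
Your proposal is correct and takes essentially the same route as the paper, whose proof of Lemma \ref{lemma2skallzero} simply reruns the argument of Lemma \ref{lemma2skzero} with $\hat\Psi$ and $\hat\mu$, using condition (2) of Theorem \ref{thmparaest} to guarantee the uniform bound $\|\hat g\|_{H^m(\Omega)}\leq C_2$ exactly as you do. One small correction: the bound on $\hat\mu$ that the argument actually uses is the upper bound $\hat\mu\leq C$, needed so that the regularization term $\frac{\hat\mu}{an}\|\hat\Psi(x-\cdot)\|_{\mathcal{N}_{\hat\Psi}(\Omega)}^2\leq C/n\rightarrow 0$ in the final chain, whereas the replicate-reduction step requires no bound on $\hat\mu$ at all (the lower bound $\hat\mu\geq 1/C$ is only used later, in the proof of Theorem \ref{thmparaest} itself, to control $a/\hat\mu$).
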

\begin{proof}
The proof of Lemma \ref{lemma2skallzero} is similar to the proof of Lemma \ref{lemma2skzero}. The only difference is that if we define $\hat g(t) = \hat \Psi(t-x) -\hat \Psi(t-X)(\hat \Psi(x-x) + \hat \mu I)^{-1}\hat \Psi(x-X)^T$, then $\|\hat g\|_{H^\nu(\Omega)}\leq C_2$ for all $\hat g$. Thus, the result follows from the proof of Lemma \ref{lemma2skzero}.
\end{proof}

Now we are ready to show the proof of Theorem \ref{thmparaest}. Let $\tilde y(x)$ be the stochastic Kriging predictor with estimated parameters $(\hat \theta_2,\hat \mu)$. Thus,
\begin{align}
    \tilde y(x) = \hat \Psi_2(x-X) (\hat \Psi_2(X-X) + \hat \mu I)^{-1}Y,
\end{align}
where $\hat \Psi_2 (x,X) = (\hat \Psi_2(x,x_1),\ldots, \hat \Psi_2(x,x_n))$ and $\hat{\Psi}_2(X,X)=(\hat \Psi_2(x_j - x_k))_{jk}$. 

\textit{Proof of Statement (i):}

Direct calculation shows that the MSPE can be expressed as 
\begin{align}\label{eqest1}
    \mathbb{E}(y(x) - \tilde y(x))^2 = & \sigma^2(\Psi(x-x) - 2\hat \Psi_2(x-X) (\hat \Psi_2(X-X) + \hat \mu I)^{-1}r_N(x) \nonumber\\
    & + \hat \Psi_2(x-X) (\hat \Psi_2(X-X)+ \hat \mu I)^{-1}K(\hat \Psi_2(X-X) + \hat \mu I)^{-1}\hat \Psi_2(x-X)^T),
\end{align}
where $K$ and $r_N$ are as in \eqref{eq:KernelMatrixNoi} and \eqref{eq:RvectorNoiX}, respectively. Similar to \eqref{pfthm2neq1}, we have for any $u=(u_1,\ldots,u_n)^T$,
\begin{align}\label{eqest2}
    & \Psi(x-x) - 2\sum_{j=1}^n u_j\Psi_S(x-x_j) +\sum_{j,k=1}^n u_j u_k\Psi_S(x_j - x_k) + a\|u\|_2^2\nonumber\\
 = & u^T \Psi_S(X-X) u -2u^T \Psi_S(X-x) + \Psi_S(x-x) + a\|u\|_2^2\nonumber+a\nonumber\\
= & \frac{1}{(2\pi)^{d/2}}\int_{\mathbb{R}^d}\bigg|\sum_{j=1}^n u_j e^{i\langle x_j,t \rangle} - e^{i\langle x,t \rangle}\bigg|^2 c(t) \mathcal{F}(\Psi)(t)dt+ a\|u\|_2^2+a\nonumber\\
\leq & \frac{1}{(2\pi)^{d/2}}\int_{\mathbb{R}^d}\bigg|\sum_{j=1}^n u_j e^{i\langle x_j,t \rangle} - e^{i\langle x,t \rangle}\bigg|^2 \mathcal{F}(\Psi)(t)dt+ a\|u\|_2^2+a\nonumber\\
\leq & \frac{A_1}{(2\pi)^{d/2}}\int_{\mathbb{R}^d}\bigg|\sum_{j=1}^n u_j e^{i\langle x_j,t \rangle} - e^{i\langle x,t \rangle}\bigg|^2 \mathcal{F}(\hat\Psi_2)(t)dt+ a\|u\|_2^2+a\nonumber\\
= &  A_1(u^T \hat \Psi_2(X-X) u -2u^T \hat \Psi_2(X-x) + \hat \Psi_2(x-x)) + a\|u\|_2^2+a\nonumber\\
\leq & \max\{A_1, a/\hat \mu\} (u^T \hat \Psi_2(X-X) u -2u^T \hat \Psi_2(X-x) + \hat \Psi_2(x-x) + \hat \mu\|u\|_2^2)+a,
\end{align}
where
\begin{align*}
c(t) = \bigg(\int_{\mathbb{R}^d} e^{i\langle \epsilon_j,t \rangle}p(\epsilon_j)d\epsilon_j\bigg)\bigg(\int_{\mathbb{R}^d}e^{i\langle -\epsilon_k,t \rangle}p(\epsilon_k)d\epsilon_k\bigg),
\end{align*}
and $a =  \Psi(x-x) - \Psi_S(x-x)$. Plugging
\begin{align*}
u = (\hat \Psi_2(X-X)+\hat \mu I)^{-1}\hat \Psi_2(X-x),
 \end{align*}
into \eqref{eqest1} and \eqref{eqest2}, we have the MSPE of predictor \eqref{eqest1} is upper bounded by
\begin{align*}
&  \max\{A_1, a/\hat \mu\}(\hat \Psi_2(x-x) - \hat \Psi_2(x-X)(\hat \Psi_2(X-X)+\hat \mu I)^{-1}\hat \Psi_2(X-x))+a\\
\leq &  \max\{A_1, aC\}(\hat \Psi_2(x-x) - \hat \Psi_2(x-X)(\hat \Psi_2(X-X)+ C I)^{-1}\hat \Psi_2(X-x))+a
\end{align*}
By Lemma \ref{lemma2skallzero}, $\hat \Psi_2(x-x) - \hat \Psi_2(x-X)(\hat \Psi_2(X-X)+C I)^{-1}\hat \Psi_2(X-x)$ converges to zero as the fill distance goes to zero, which indicates that $\sigma^2 a$ is an asymptotic upper bound on the MSPE of stochastic Kriging with estimated parameters. Note that $\sigma^2 a$ is also the limit of KALEN with the true parameters, which is the best linear unbiased predictor. Therefore, $\sigma^2 a$ is the limit of stochastic Kriging with estimated parameters. 

Note that KALEN is
\begin{align}\label{eq:pfKalenest}
    \hat y(x) = \hat r_N(x)^T(\hat \Psi_S(X-X) + \hat a I)^{-1}Y,
\end{align}
where $\hat \Psi_S(X-X) = (\hat \Psi_S(x_j-x_k))_{jk}$, $\hat r_N(x) = (\hat\Psi_S(x-x_1),...,\hat \Psi_S(x-x_n))$, 
\begin{align*}
\hat \Psi_S(s-t) = \iint\hat \Psi_1(s+\epsilon_1-(t+\epsilon_2))\hat p(\epsilon_1)\hat p(\epsilon_2)d\epsilon_1 d\epsilon_2,
\end{align*}
and $\hat a = \hat \Psi_1(x-x) - \hat \Psi_S(x-x)$. Condition (4) in Theorem \ref{thmparaest} implies that $\hat a$ is bounded away from zero. Thus, repeating the argument in the proof of stochastic Kriging completes the proof of Statement (i).

\textit{Proof of Statement (ii):}

By direct calculation, it can be shown that
\begin{align}\label{eqest3}
    \mathbb{E}(y(x) - \tilde y(x))^2 = & \sigma^2(\Psi(x-x) - 2\hat \Psi_2(x-X) (\hat \Psi_2(X-X) + \hat \mu I)^{-1}r(x) \nonumber\\
    & + \hat \Psi_2(x-X) (\hat \Psi_2(X-X)+ \hat \mu I)^{-1}K(\hat \Psi_2(X-X) + \hat \mu I)^{-1}\hat \Psi_2(X-x)),
\end{align} 
where $r(x)$ is as in \eqref{eq:RvectorNoi}. Let $b(t) = \int_{\mathbb{R}^d}e^{i\langle \epsilon_j,t \rangle}p(\epsilon_j)d\epsilon_j$. For any $u = (u_1,\ldots,u_n)^T$, we have
\begin{align}\label{pfpara2}
& u^T \Psi_S(X-X) u -2u^T r(x) + \Psi(x-x) + a\|u\|_2^2\nonumber\\
= & \frac{1}{(2\pi)^{d/2}}\int_{\mathbb{R}^d}\bigg|\sum_{j=1}^n  u_j e^{i\langle x_j,t \rangle}b(t) - e^{i\langle x,t \rangle}\bigg|^2 \mathcal{F}(\Psi)(t)dt+ a\|u\|_2^2\nonumber\\
\leq & \frac{(1 + C_1^2)}{(2\pi)^{d/2}}\int_{\mathbb{R}^d}\bigg|\sum_{j=1}^n u_j e^{i\langle x_j,t \rangle} - e^{i\langle x,t \rangle}\bigg|^2 |b(t)|^2\mathcal{F}(\Psi)(t)dt  + \frac{(1 + C_1^{-2})}{(2\pi)^{d/2}}\int_{\mathbb{R}^d} |1-|b(t)||^2\mathcal{F}(\Psi)(t)dt +  a\|u\|_2^2\nonumber\\
\leq & \frac{(1 + C_1^2)A_1}{(2\pi)^{d/2}}\int_{\mathbb{R}^d}\bigg|\sum_{j=1}^n u_j e^{i\langle x_j,t \rangle} - e^{i\langle x,t \rangle}\bigg|^2 |b(t)|^2\mathcal{F}(\hat \Psi_2)(t)dt  + \frac{(1 + C_1^{-2})}{(2\pi)^{d/2}}\int_{\mathbb{R}^d} |1-|b(t)||^2\mathcal{F}(\Psi)(t)dt +  a\|u\|_2^2\nonumber\\
\leq &  (1 + C_1^2)A_1(u^T \hat \Psi_2(X-X) u -2u^T \hat \Psi_2(X-x) + \hat \Psi_2(x-x)) + a\|u\|_2^2 + \frac{(1 + C_1^{-2})}{(2\pi)^{d/2}}\int_{\mathbb{R}^d} |1-|b(t)|^2|\mathcal{F}(\Psi)(t)dt \nonumber\\
\leq & \max\{(1 + C_1^2)A_1,a/\hat \mu\}(u^T \hat \Psi_2(X-X) u -2u^T \hat \Psi_2(X-x) + \hat \Psi_2(x-x) + \hat \mu\|u\|_2^2)\nonumber\\
&+ \frac{(1 + C_1^{-2})}{(2\pi)^{d/2}}\int_{\mathbb{R}^d} |1-|b(t)|^2|\mathcal{F}(\Psi)(t)dt.
\end{align}
Plugging $u = (\hat \Psi_2(X-X)+\hat \mu I)^{-1}\hat \Psi_2(X-x),$ into \eqref{eqest3} and \eqref{pfpara2}, we find the MSPE of predictor \eqref{eq:BLUPSK1} is upper bounded by
\begin{align*}
& \max\{(1 + C_1^2)A_1,a/\hat \mu\}(\hat \Psi_2(x-x) - \hat \Psi_2(x-X)(\hat \Psi_2(X-X)+\hat \mu I)^{-1}\hat \Psi_2(X-x))\nonumber\\
& + \frac{(1 + C_1^{-2})}{(2\pi)^{d/2}}\int_{\mathbb{R}^d} |1-|b(t)|^2|\mathcal{F}(\Psi)(t)dt\\
\leq & \max\{(1 + C_1^2)A_1,aC\}(\hat \Psi_2(x-x) - \hat \Psi_2(x-X)(\hat \Psi_2(X-X)+C I)^{-1}\hat \Psi_2(X-x))\nonumber\\
& + \frac{(1 + C_1^{-2})}{(2\pi)^{d/2}}\int_{\mathbb{R}^d} |1-|b(t)|^2|\mathcal{F}(\Psi)(t)dt.
\end{align*}
We take $C_1^2=25$. By Lemma \ref{lemma2skallzero}, $\hat\Psi_2(x-x) - \hat\Psi_2(x-X)(\hat\Psi_2(X-X)+C I)^{-1}\hat\Psi_2(X-x)$ converges to zero as the fill distance goes to zero since $C$ is a constant, which finishes the proof for stochastic Kriging.

Note that the KALE is 
\begin{align*}
    \hat f(x) = \hat r(x)^T(\hat \Psi_S(X-X) + \hat a I)^{-1}Y,
\end{align*}
where $\hat r(x)$ is as in \eqref{eq:RvectorNoi} with estimated parameters, and $\hat \Psi_S(X-X)$ and $\hat a$ are as in \eqref{eq:pfKalenest}. Because $\hat \Psi_1$ is a correlation function and $\hat p(\cdot) = p(\cdot)$, we have $ \hat \Psi_1(x-x) = \Psi(x-x)$ and $\hat \Psi_S(x-x) = \Psi_S(x-x)$, which imply $\hat a = \hat \Psi(x-x) - \hat \Psi_S(x-x) = \Psi(x-x) - \Psi_S(x-x) = a$. Then for any $u = (u_1,\ldots,u_n)^T$, we have
\begin{align}\label{pfpara22}
& u^T \Psi_S(X-X) u -2u^T r(x) + \Psi(x-x) + a\|u\|_2^2\nonumber\\
= & \frac{1}{(2\pi)^{d/2}}\int_{\mathbb{R}^d}\bigg|\sum_{j=1}^n  u_j e^{i\langle x_j,t \rangle}b(t) - e^{i\langle x,t \rangle}\bigg|^2 \mathcal{F}(\Psi)(t)dt+ a\|u\|_2^2\nonumber\\
\leq & \frac{A_1}{(2\pi)^{d/2}}\int_{\mathbb{R}^d}\bigg|\sum_{j=1}^n  u_j e^{i\langle x_j,t \rangle}b(t) - e^{i\langle x,t \rangle}\bigg|^2 \mathcal{F}(\hat \Psi_1)(t)dt+ a\|u\|_2^2\nonumber\\
= & A_1(u^T\hat \Psi_S(X-X) u -2u^T \hat r(x) + \hat \Psi_1(x-x)) + a\|u\|_2^2.
\end{align}
Note that $\hat f(x)$ minimizes \eqref{pfpara22}. Then repeating the proof of Theorem \ref{Thm:main} gives an upper bound
\begin{align*}
    \frac{1.04A_1\sigma^2}{(2\pi)^{d/2}}\int_{\mathbb{R}^d} |1-|b(t)|^2|\mathcal{F}(\hat \Psi_1)(t)dt.
\end{align*}
Together with $\mathcal{F}(\hat \Psi_1)(t)\leq A_2 \mathcal{F}(\Psi)(t)$ for any $t$, we finish the proof.

\bibliographystyle{apalike}
\bibliography{paperref}

\end{document}